\numberwithin{equation}{section}
\theoremstyle{plain}
\newtheorem{lemma}{Lemma}[section]
\newtheorem{proposition}[lemma]{Proposition}
\newtheorem{theorem}[lemma]{Theorem}
\theoremstyle{definition}
\newtheorem{definition}[lemma]{Definition}
\newtheorem*{definition*}{Definition}
\newtheorem{remark}[lemma]{Remark}
\newtheorem{example}[lemma]{Example}
\definecolor{brown}{RGB}{150,100,0}
\definecolor{purple}{RGB}{150,0,100}
\definecolor{grey}{RGB}{128,128,128}
\newcommand{\R}{{\mathbb R}}
\newcommand{\E}{{\mathbb E}}
\newcommand{\N}{{\mathbb N}}
\newcommand{\Q}{{\mathbb Q}}
\newcommand{\Dd}{{\mathcal D}}
\newcommand{\Ff}{{\mathcal F}}
\newcommand{\Hh}{{\mathcal H}}
\newcommand{\Jj}{{\mathcal J}}
\newcommand{\Mm}{{\mathcal M}}    %moduli space
\newcommand{\Pp}{{\mathcal P}}
\newcommand{\Ss}{{\mathcal S}}
\newcommand{\Xx}{{\mathcal X}}
\newcommand{\Yy}{{\mathcal Y}}
\newcommand{\V}{{\mathcal V}}
\newcommand{\eps}{{\varepsilon}}
\newcommand{\g}{{\mathfrak g}}
\newcommand{\diam}{{\rm diam}}
\newcommand{\pb}{{\mathbf p}}
\newcommand{\sign}{{\rm sign}}
\def\NABLA#1{{\mathop{\nabla\kern-.5ex\lower1ex\hbox{$#1$}}}}
\def\Nabla#1{\nabla\kern-.5ex{}_#1}
\newcommand{\p}{{\partial}}
\newcommand{\la}{\langle}
\newcommand{\ra}{\rangle}
\newcommand{\rom}[1]{{\em #1}}
\renewcommand{\)}{\rom)}
\renewcommand{\:}{\colon}
\mathchardef\mhyp="2D
\DeclareMathOperator{\Lin}{Lin}
\DeclareMathOperator{\Hom}{Hom}
\DeclareMathOperator{\MSE}{MSE}
\begin{document}
	\title[Nonparametric  estimations  and the diffeological Fisher metric]{Nonparametric  estimations and the diffeological  Fisher  metric}

	\author[H. V. L\^e]{H\^ong V\^an L\^e}	
	\address{Institute  of Mathematics of the Czech Academy of Sciences,
	Zitna 25, 11567  Praha 1, Czech Republic}
\email{hvle@math.cas.cz}

\author[A. A. Tuzhilin]{Alexey A. Tuzhilin}
\address{Moscow  State  University  Lomonosov,   Faculty  of Mechanics and  Mathematics, Moscow, Russia, 119991}
\email{tuz@mech.math.msu.su}

%	\date{\today}

\thanks{Research  of HVL was supported  by  GA\v CR-project 18-01953J and	 RVO: 67985840}
\keywords{weak $C^k$-map, diffeological    Fisher  metric,  diffeological Cram\'er--Rao inequality,  Jeffrey  prior, Hausdorff measure}
\subjclass[2010]{Primary: 62G05, Secondary: 28A75}

\begin{abstract}
	In  this     paper,  first, we      survey  the concept   of   diffeological Fisher  metric  and its naturality,  using    functorial language   of    probabilistic morphisms, and slightly  extending L\^e's theory in \cite{Le2020} to include  weakly $C^k$-diffeological  statistical  models.   Then we introduce    the resulting   notions of  the diffeological    Fisher  distance,  the  diffeological Hausdorff--Jeffrey measure  and    explain their   role   in   classical   and Bayesian nonparametric  estimation problems  in statistics.
	
\end{abstract}

\maketitle
%\tableofcontents

\section{Introduction}\label{sec:intr}

  In  the present paper    we  survey    the  concept  of the diffeological   Fisher  metric, introduced  in  \cite{Le2020},  and explain its role in   frequentist and Bayesian  {\it  nonparametric}  density estimations.   Diffeological Fisher  metric  is   a natural extension  of the   Fisher  metric  to    singular  statistical models,  which are ubiquitous  in  machine  learning \cite{Watanabe2009}, \cite{Amari2016}.   Among   different  approaches  to  singular spaces,   we find the Souriau  theory  of diffeological spaces \cite{Souriau1980}   best suitable  for   our study  of statistical models,   parameterized statistical models  and  dynamics  on them.     The role  of   the  diffeological  Fisher  metric  in   frequentist nonparametric   estimation is expressed  via the  Cram\'er--Rao inequality (Theorem  \ref{thm:cr}, Remark \ref{rem:cr}).  The   role  of the  diffeological Fisher  metric  in Bayesian   estimations  is expressed   via  the          choice  of the objective  a prior     Hausdorff--Jeffrey   measure  on  2-integrable diffeological statistical models (Definition \ref{def:hausdorfj}, Theorem \ref{thm:jeffrey}).    The Hausdorff-Jeffrey    measure   is a natural  generalization of the    Jeffrey measure,  using  the     concept  of the  diffeological Fisher  distance  that is introduced  in the  present  paper,  and combining  with  the  concept  of the Hausdorff measure  in geometric measure  theory.   Geometric measure   theory could be described  as differential geometry, generalized through measure  theory to deal   with   singular mappings and  singular spaces  and applied to the  calculus of variations \cite{Federer1969}, \cite{AT2004}, \cite{Morgan2009}. Hausdorff measures play an important role in several areas of mathematics, e.g., in the theory of fractals, in the theory of stochastic processes. We  also  refer  the reader to \cite{JLT2021}   for a categorical   treatment  of the   Dirichlet (a prior) measure on  the set $\Pp (\Xx)$ of all probability  measures  on a  measurable space $\Xx$ whose  $\sigma$-algebra  will be denoted by $\Sigma_\Xx$.

The remaining part  of  our  paper is organized as follows. In Section \ref{sec:2} we  recall the concept of a $C^k$-diffeological space, adapted  from \cite{IZ2013},  and the resulting concepts  of   a  $C^k$-diffeological  statistical model and a weakly $C^k$-diffeological statistical model (Definitions \ref{def:diff}, \ref{def:wtangent}, Example \ref{ex:weakc1},  Lemma \ref{lem:dominated}, Remarks \ref{rem:weak}, \ref{rem:wfisher}), the  notion of the diffeological  Fisher  metric,  slightly extending   the   concepts  introduced by L\^e in \cite{Le2020}.  Then  we introduce  the  notion of the   diffeological Fisher  distance (Definition \ref{def:length}, Theorem \ref{thm:distance}, Remark \ref{rem:wdist}). In the last part  of Section \ref{sec:2} we recall the concept  of   probabilistic  morphisms  and     the monotonicity (resp.  the invariance) of the  diffeological  Fisher metric    under   probability  morphisms     (resp. sufficient   probabilistic  morphisms). Then  we  deduce   similar   functorial   properties  for  the diffeological  Fisher distance.  In  Section \ref{sec:3}  we   recall  the concept  of  a nonparametric
$\varphi$-estimator introduced  in \cite{Le2020}  and   the  related  diffeological  Cram\'er--Rao inequality, proved in \cite{Le2020}, see  also Remark \ref{rem:cr}, where   we discuss  the validity  of the   diffeological Cr\'amer--Rao for  weakly  $C^k$-diffeological statistical models.  In   Section \ref{sec:hausdorfj}  we  introduce  the  resulting notion  of the Hausdorff--Jeffrey measure (Definition \ref{def:hausdorfj}).  Then   we  derive  their    monotonicity and invariance property    from the   corresponding properties    of the Fisher  distance (Theorem \ref{thm:jeffrey}). In the last section  we  discuss  some   open questions and future directions.

\section{Diffeological  Fisher metric, diffeological  Fisher  distance  and probabilistic morphisms}\label{sec:2}

First  let us recall the notion of a   $C^k$-diffeological space.

\begin{definition}\label{def:diff} {\cite[Definition 3]{Le2020}, cf. \cite[\S 1.5]{IZ2013}}
For $k \in\N \cup \infty$  and   a nonempty set $X$,   a {\it $C^k$-diffeology} of $X$ is a set $\Dd$ of   mappings  $\pb: U \to  X$,  where $U$ is an open domain  in $\R^n$, and $n$   runs over  nonnegative integers,  such that the three following
	axioms are satisfied.
	
	D1. {\it Covering}. The set $\Dd$ contains the constant mappings ${\bf x}:   r\mapsto x$, defined on $\R^n$, for all $x \in X$  and  for all $n \in \N$.
	
	D2. {\it  Locality}. Let ${\pb}: U \to X$ be a mapping. If for every
	point $r \in U$ there exists  an open neighborhood  $V $ of $r$ such that  ${\pb}_{| V}$ belongs to  $\Dd$ then  the map $\pb$ belongs to $\Dd$.
	
	D3. {\it Smooth compatibility}. For every element $\pb: U \to  X$ of $\Dd$, for every real
	domain $V$, for every  $\psi \in C^k(V, U)$, $\pb \circ \psi$ belongs to $\Dd$.
	
	A {\it $C^k$-diffeological space}  $X$ is a nonempty set $X$ equipped with a $C^k$-diffeology  $\Dd$. Elements  $\pb: U \to X$  of $\Dd$   will be called {\it  $C^k$-maps  from $U$ to $X$}.
	
	 A  map  $f:  (X, \Dd) \to ( X', \Dd')$ between    two  $C^k$-diffeological  spaces    is  called {\it a $C^k$-map}, if  for  any  $\pb \in \Dd$   we have $ f  \circ \pb \in \Dd'$.
\end{definition}

\underline{Digression.} Recall  that    a map
$\varphi: U \to  V$  is  called {\it weakly    \(Fr\'echet\/\)\footnote{in this paper we shall consider  only (possibly weakly) Fr\'echet differentiable mappings and we shall  omit  ``Fr\'echet" in the  remaining part of this paper}  differentiable   in $u_0 \in U$}  if  there exists  a bounded  linear  operator  $ d\varphi_{u_0}: \R^n \to V$  such that \cite[p. 384]{AJLS2017}
$$w\mhyp\lim_{ v \to 0}  \frac{\varphi (u_0  + v) - \varphi(u_0) -  d\varphi_{u_0}  (v)}{|| v||} = 0 ,$$
where  $w\mhyp\lim$ denotes  the weak  limit. In this case  $d\varphi_{u_0}$  is called  {\it the weak differential  of $\varphi$  at $u_0$}. Denote by $\Lin (E, V)$ the  Banach  space  of bounded  linear maps from  a Banach  space  $E$ to  a Banach  space $V$  with the induced  norm.  A  map  $\varphi: U \to V$  is called {\it a  weak $C^k$-map},  if it is  weakly differentiable,  and if the inductively defined maps  $d^1: = d\varphi:  U \to Lin (\R^n, V)$,  and
$$d^{r+1} \varphi: \R^n \to \Lin  ((\R^n)^r,  V),  u \mapsto d (d^r\varphi)_u  \in \Lin ((\R^n)^{r-1}, V) $$
are  weakly differentiable  for  $r = 1, \cdots, k -1$ and weakly continuous  for $r =k$  \cite[p. 384]{AJLS2017}.
Clearly  the composition  of   weak $C^k$-maps  is a    weak $C^k$-map  and   a   weak $C^k$-map  between
finite dimensional smooth manifolds   is a  $C^k$-map.  We    also   write  shorthand  ``$w\mhyp C^k$-map"  for ``weak $C^k$-map".

 The concept  of   a  weak $C^k$-map  is  a   natural   extension of the  concept of  weak convergence.  The  weak convergence  of measures  is  one of  most important  tools in  applied and theoretical statistics  \cite{Bogachev2018}.  It is known that  the class of  weakly  differentiable maps  is   strictly larger  than  the class  of  differentiable  maps \cite{Kaliaj2016}.

\begin{example}\label{ex:wcan}  (1) Let $ V$  be a Banach  space.  Then  $V$  has the   canonical   $C^k$-diffeology $\Dd_{can}^k$  that consists of all   $C^k$-mappings   $\pb: U \to V$, where  $U$  is an open domain in $\R^n$.     The  space $V$   has  also  another  $C^k$-diffeology  $\Dd_{w}^k$  that consists  of  all  weak $C^k$-mappings  $U \to V$, where  $U$ is an open  domain  in $\R^n$.

(2)  Assume that   $(X', \Dd')$  is a   $C^k$-diffeological   space  and  $ f: X \to X'$ is  a   map.  Then
 the {\it pullback  diffeology} $f^* (\Dd')$  is    the $C^k$-diffeology on $X$  defined as follows \cite[p. 14]{IZ2013},
 $$ f^* (\Dd') : = \{  \pb: U \to X|\,  f \circ  \pb \in \Dd'\},$$
 where  $U$ is an open subset of $\R^n$.

 (3) Let  $(X, \Dd)$  be a  $C^k$-diffeological  space  and  $f: X \to X'$   a map. Then   the  {\it pushforward diffeology}
 $f_*(\Dd)$  is  the  diffeology  on  $X'$ that consists  of all   mappings $\pb: U \to X'$ where  $U \subset \R^n$ is  an open  subset   and $\pb$ satisfies the following property  \cite[p. 24]{IZ2013}.  For every  $ u \in U$,   there exists  an open neighborhood  $O (u)\subset U$  of $u$ such that, either  $\pb|_{O(u)}$  is a constant  map, or  there exists  a map ${\bf q}: O(u) \to   X$  such that $\pb |_{O(u)} =   f\circ {\bf q}$.
\end{example}

Let us   recall  that  the space $\Ss(\Xx)$ of  all  finite signed  measures on a measurable  space $\Xx$ is   a Banach space, denoted by $\Ss(\Xx)_{TV}$,
with the  total  variation norm $\| \cdot \| _{TV}$. For any statistical model $\Pp_\Xx$, which is, by definition,  any subset in  $\Pp(\Xx)\subset\Ss(\Xx)$ \cite{McCullagh2002}, \cite{Le2020},   we denote by $i: \Pp_\Xx \to \Ss(\Xx)$ the natural  inclusion.

\begin{definition}\label{def:wstat} {cf. \cite[Definition 3]{Le2020}}
(1)	A   statistical model  $\Pp_\Xx$  endowed
	with a $C^k$-diffeology $\Dd_\Xx$  is called  a {\it $C^k$-diffeological statistical  model} or a  {\it weakly  $C^k$-diffeological  statistical model\/}, respectively,  if    $i_*(\Dd_\Xx)  \subset \Dd_{can} ^k$  or
	  $i_*(\Dd_\Xx)  \subset \Dd_{w} ^k$, respectively.  A    $C^k$-diffeology on  a   weakly $C^k$-diffeological statistical   model will be called {\it a weak  $C^k$-diffeology}.
	
(2)	 Let $\Dd_\Xx$ be a   $C^k$-diffeology  on  a statistical model $\Pp_\Xx$. For $l \in \N \cup \infty$  we shall  call an element  $\pb: U \to \Pp_\Xx$ in $\Dd_\Xx$   of  {\it class  $C^{k+l}$}  or  of   {\it  class $w\mhyp C^{k+1}$}, respectively, if $i \circ  \pb \in \Dd_{can} ^{k+l}$  or $i \circ  \pb \in \Dd_{w} ^{k+l}$, respectively.  In other words,  there is a filtration  of  diffeologies  $(i ^* (\Dd^\infty_{can}) \cap \Dd_\Xx) \subset \cdots   \subset (i^*(\Dd^k_{can})\cap \Dd_\Xx )= \Dd_\Xx$  or $(i ^* (\Dd^\infty_{w}) \cap \Dd_\Xx ) \subset \cdots   \subset (i^*(\Dd^k_{w}) \cap \Dd_\Xx) = \Dd_\Xx$), respectively.
\end{definition}

 Examples  of
$C^k$-diffeological  statistical models are      the image  $(\pb(M), \pb _* (\Dd_{can}^k))$ of     parameterized   statistical models  $(M, \Xx, \pb)$,   where   $M$ is a   smooth    Banach manifold  and  $i \circ \pb: M \to \Ss(\Xx)$  is a  $C^k$-map,  see \cite[Example 8.2]{Le2020}. There  are  many  parameterized   statistical models $(M, \Xx, \pb)$  whose  image  $\pb (M)$ are  singular  statistical models \cite{Amari2016}, \cite{Watanabe2009}, see  also  Example \ref{ex:normalm} below.  We shall    provide  an  example  of an weakly  $C^1$-diffeological  statistical model, which  is not   a $C^1$-diffeological  statistical model.
 \begin{example}\label{ex:weakc1}  Let  $\Xx = [-\pi, \pi]$   with  the Lebesgue measure  $dx$.
 For  $ t \in [-1, 1]\setminus \{0\}$  we   set
 $$ f_t (x) = \sin (\frac{x}{t})$$
 and we   let   $f_0 (x) =0$. Then  for  all $t$  we have  $f_t  \in  L^1 (\Xx, dx)$  and
 $f_t$   is weakly  continuous  in $L^1 (\Xx, dx)$  but not  strongly continuous.
 Next we  define   a function $F_t (x)$  for $ t \in (-1, 1)$  and $ x\in [-\pi, \pi]$  as follows.
 $$F_t (x) : = \int_0 ^t f_s (x)ds.$$
 Since   $f_s(x) = - f_s(-x)$, for  all $t \in [-1, 1]$
 \begin{equation}\label{eq:p}
 \int _{-\pi}^\pi  F_t  (x) \,dx  = 0.
 \end{equation}
 Since  $F_t (x)$ is continuous  in  $t$ and  in $x$  there  exists  a  number
 $A>0$  such that
 $$2 \pi |F_ t (x) | \le  A \text{ for all } (t, x) \in [-1, 1]\times [-\pi, \pi].$$   Finally we define a  map $c : (-1, 1) \to  \Pp (\Xx) \subset \Ss (\Xx)$
 $$  c(t) : =  \big(\frac{1}{2\pi} +  \frac{F_t (x)}{2A}\big) d x.$$
 Clearly $c (t)$ is   differentiable, but its derivative  $c' (t) = {1\over  2A}f_t(x)  dx$   is  only  weakly continuous,  therefore  the   map $c$ is  a weak $C^1$-map  but not  a $C^1$-map.  Hence
 the image   of $c$  is a weakly  $C^1$-diffeological  statistical model, which  is not   a $C^1$-diffeological  statistical model.
 \end{example}
  Concerning  weakly  $C^k$-diffeological statistical models we have the following local structure result.

Let us recall  that   a finite  signed  measure  $\nu\in \Ss(\Xx)$  is said  to be  dominated by   a non-negative measure   $\mu$ on $\Xx$, if     $\mu (A) = 0$  implies $\nu (A) = 0$ for any $A \in \Sigma_\Xx$. Alternatively, $\nu$ is  called absolutely continuous w.r.t. $\mu$, see e.g. \cite[Chapter IV]{Neveu1970}.

\begin{lemma}\label{lem:cdominated} {cf. \cite[Proposition 3.3, p. 150]{AJLS2017}} Assume that $U \subset \R^n$ is an open connected domain and $\varphi: U \to \Pp (\Xx)$ is a   map such   that $i \circ \varphi: U \to \Ss(\Xx)$  is a weak $C^1$-map. Then  there exists  $\mu_0 \in \Pp (\Xx)$  that dominates $\varphi (u)$  for all $u \in U$.
\end{lemma}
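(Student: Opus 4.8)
The plan is to produce one probability measure out of countably many values of $\varphi$ and then show, by a density argument, that it dominates the whole image. First I would use that $U$, being an open subset of $\R^n$, is separable, and fix a countable dense subset $\{u_j : j \in \N\}$ of $U$. I would then set
$$\mu_0 := \sum_{j \in \N} 2^{-j}\, \varphi(u_j),$$
and dispose of the routine point that this is a well-defined element of $\Pp(\Xx)$: each $\varphi(u_j)$ is a nonnegative measure of total mass $1$, so exchanging the order of summation of nonnegative terms shows $\mu_0$ is countably additive with $\mu_0(\Xx)=1$.

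Next I would fix $A \in \Sigma_\Xx$ with $\mu_0(A)=0$ and aim to show $\varphi(u)(A)=0$ for all $u\in U$. Since the summands defining $\mu_0$ are nonnegative, $\varphi(u_j)(A)=0$ for every $j$. The key observation is that the evaluation map $\ell_A : \Ss(\Xx)_{TV} \to \R$, $\ell_A(\nu):=\nu(A)=\int_\Xx \mathbf 1_A\,d\nu$, is a bounded linear functional, since $|\ell_A(\nu)| \le |\nu|(A) \le \|\nu\|_{TV}$; hence $\ell_A \in (\Ss(\Xx)_{TV})^{*}$. A weak $C^1$-map is in particular weakly continuous: from the defining limit of weak differentiability of $i\circ\varphi$ at $u_0$ one gets $w\mhyp\lim_{v\to 0}\big(\varphi(u_0+v)-\varphi(u_0)\big)=0$, by multiplying that limit by $\|v\|\to 0$ and using boundedness of $d\varphi_{u_0}$. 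Consequently the scalar function $g:=\ell_A\circ(i\circ\varphi):U\to\R$, $g(v)=\varphi(v)(A)$, is continuous on $U$ (which is metrizable, so sequential continuity suffices).

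To finish, I would note that $g$ is nonnegative, because each $\varphi(v)$ is a probability measure, and that it vanishes on the dense set $\{u_j\}$; by continuity $g\equiv 0$ on $U$, so in particular $\varphi(u)(A)=g(u)=0$. Since $A\in\Sigma_\Xx$ was arbitrary, $\varphi(u)$ is dominated by $\mu_0$ for every $u\in U$, which is the assertion.

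I do not anticipate a genuine obstacle here: the argument uses only weak continuity of $i\circ\varphi$, which is strictly weaker than the stated weak $C^1$ hypothesis, and it uses neither differentiability nor connectedness of $U$. The only steps calling for a little care are the verification that $\mu_0$ is honestly a countably additive probability measure, and the passage from weak continuity of $i\circ\varphi$ to continuity of each scalar composite $v\mapsto\xi(\varphi(v))$; for the latter one simply invokes metrizability of $U$, and one could even record that $g$ is in fact $C^1$ with $dg_v=\ell_A\circ d\varphi_v$ should a stronger conclusion be wanted.
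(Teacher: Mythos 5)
Your proof is correct and follows essentially the same route as the paper: pick a countable dense subset of $U$, dominate the corresponding countable family of measures, and extend to all of $U$ by weak continuity of $i\circ\varphi$ composed with the bounded functional $\nu\mapsto\nu(A)$. The only difference is presentational --- the paper cites \cite[Lemma 3.1, p.~146]{AJLS2017} for the existence of a measure dominating a countable family, whereas you construct it explicitly as $\sum_j 2^{-j}\varphi(u_j)$, which is the same standard construction.
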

\begin{proof}    Let  $U_\Q \subset U$  be  the subset  of all  points in $U$ with rational coordinates in $\R^n$. Then $U_\Q$  is a  countable  set. By \cite[Lemma 3.1, p. 146]{AJLS2017}, cf. \cite[Ex.IV.1.3]{Neveu1970} there  is a measure $\mu_0 \in \Pp (\Xx)$  that  dominates $\varphi (u)$  for all $u \in U_\Q$. Now let  $u \in U$.  We  shall prove  that $\varphi (u)\ll  \mu_0$. Assume  that
$A \in \Sigma_\Xx$  is a null-set  of  $\mu_0$.
Then   for all  $k$ we  have  $\varphi(u_k)  (A) =0$.  Since  $\varphi: U \to \Ss(\Xx)$ is weakly continuous, and
$u_\Q$ is dense  in $U$, it follows  that  $\varphi (u) (A) =0$. Hence  $\varphi(u) \ll \mu_0$. This  completes
the  proof  of Lemma \ref{lem:dominated}.
\end{proof}

The concept of   the tangent  space of a  $C^k$-diffeological  statistical  model $(\Pp_\Xx, \Dd_\Xx)$  at a point $\xi \in \Pp_\Xx$ \cite[Remark 2]{Le2020}) extends naturally to  the case  of  weakly $C^k$-diffeological  statistical models, see  Definition \ref{def:wtangent} below. We also   refer the reader  to  \cite[(5.1)]{Souriau1980},  \cite[p. 166]{IZ2013} for  a bit more abstract  approach.
Note   that any  $C^k$-diffeological statistical model  is a  weakly $C^k$-diffeological  statistical  model.

\begin{definition}\label{def:wtangent}{cf.  \cite[Remark 2]{Le2020}}  Let  $(\Pp_\Xx, \Dd_\Xx)$  be a  %$C^k$-diffeological  statistical  model  (resp.
weakly  $C^k$-diffeological statistical model.  Let  $ c:  (-\eps, \eps) \to  (\Pp_\Xx, \Dd_\Xx)$  be a   $C^k$-map.  The  %{\it tangent vector}  $\p_t c(0)$  (resp.  the
{\it tangent  vector}  $\p_t c(0)$  at  $c(0)$ is the image  of the  map $dc_{0}   (\p t)\in \Ss(\Xx)$, where $dc_0$ is %the   differential  (resp.
the weak  differential of  $c$ at $0$. For  $\xi\in  \Pp_\Xx$, the
tangent  cone $C_\xi (\Pp_\Xx, \Dd_\Xx)$  consists  of  all
 tangent   vectors  $\p_t c(0)$ at $c(0) = \xi$, where $c: (0,1) \to (\Pp_\Xx, \Dd_\Xx)$ be a $C^k$-map, and the  tangent  space $T_\xi  (\Pp_\Xx, \Dd_\Xx)$  is the linear  hull of
$C_\xi (\Pp_\Xx, \Dd_\Xx)$.
\end{definition}

\begin{lemma}\label{lem:dominated}  Let $v$  be a  tangent  vector   at $\xi$ in  a weakly  $C^k$-diffeological  statistical model  $(\Pp_\Xx, \Dd_\Xx)$. Then $v$ is dominated  by  $\xi$.
\end{lemma}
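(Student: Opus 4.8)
The plan is to reduce to a single curve, translate the defining property of the weak differential into ordinary differentiability of $t\mapsto c(t)(A)$ by pairing with the evaluation functional $\nu\mapsto\nu(A)$, and then pin the derivative at $0$ down to zero by a positivity/extremum argument. By Definition \ref{def:wtangent} the tangent space $T_\xi(\Pp_\Xx,\Dd_\Xx)$ is the linear hull of the tangent cone $C_\xi(\Pp_\Xx,\Dd_\Xx)$, and (by the definition recalled just before Lemma \ref{lem:cdominated}) a finite signed measure is dominated by $\xi$ as soon as it vanishes on every $\xi$-null set; since this condition is stable under finite linear combinations, it suffices to treat $v=\p_t c(0)=dc_0(\p t)\in C_\xi$, where $c$ is a $C^k$-map of an open interval $I\ni 0$ into $(\Pp_\Xx,\Dd_\Xx)$ with $c(0)=\xi$ and $dc_0$ its weak differential; in particular $i\circ c$ is weakly differentiable at $0$.

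Fix $A\in\Sigma_\Xx$ with $\xi(A)=0$ and let $\ell_A\colon\Ss(\Xx)_{TV}\to\R$, $\ell_A(\nu):=\nu(A)$. Since $|\ell_A(\nu)|\le|\nu|(\Xx)=\|\nu\|_{TV}$, the map $\ell_A$ is a bounded linear functional, hence continuous for the weak topology of $\Ss(\Xx)_{TV}$. Applying $\ell_A$ to
$$ w\mhyp\lim_{s\to 0} \frac{(i\circ c)(s)-(i\circ c)(0)-s\, dc_0(\p t)}{|s|}=0 $$
and setting $g(t):=c(t)(A)=\ell_A\big((i\circ c)(t)\big)$, $v(A):=\ell_A\big(dc_0(\p t)\big)$, we obtain
$$ \lim_{s\to 0} \frac{g(s)-g(0)-s\, v(A)}{|s|}=0, $$
i.e. $g$ is differentiable at $0$ with $g'(0)=v(A)$.

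Finally, $c$ takes values in $\Pp_\Xx\subset\Pp(\Xx)$, so $c(t)$ is a probability measure and $g(t)=c(t)(A)\ge 0$ for all $t\in I$, while $g(0)=\xi(A)=0$. Thus $g$ attains its minimum at the interior point $0$ of $I$, so that $g'(0)=\lim_{s\downarrow 0} g(s)/s\ge 0$ and $g'(0)=\lim_{s\uparrow 0} g(s)/s\le 0$, whence $v(A)=g'(0)=0$. As $A$ was an arbitrary $\xi$-null set, $v$ is dominated by $\xi$; and for general $v=\sum_j a_j v_j\in T_\xi(\Pp_\Xx,\Dd_\Xx)$ with $v_j\in C_\xi$ and $a_j\in\R$ we conclude $v(A)=\sum_j a_j v_j(A)=0$ on every $\xi$-null $A$, which proves the lemma.

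The argument is short, and the only delicate point is the last step: if $c$ were only defined on a one-sided neighborhood of $0$ we would merely get $v(A)\ge 0$ for $\xi$-null $A$, not $v(A)=0$. This is not an issue, because $dc_0$ in Definition \ref{def:wtangent} is a genuine (two-sided) weak differential in the sense of the Digression, so $c$ is defined on an open interval around $0$. One could instead first invoke Lemma \ref{lem:cdominated} to produce a common dominating measure $\mu_0\in\Pp(\Xx)$ and pass to Radon--Nikodym densities, but that route only yields $v\ll\mu_0$, and the extremum argument above would still be needed to upgrade this to $v\ll\xi$.
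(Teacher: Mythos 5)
Your proof is correct and follows essentially the same route as the paper's: both apply the bounded linear evaluation functional $\mu\mapsto\mu(A)$ to the weakly differentiable curve to obtain an ordinary differentiable nonnegative real function vanishing at the interior point $0$, and conclude that its derivative there, namely $v(A)$, must vanish. Your additional remarks (the reduction from the tangent space to the tangent cone via linearity of the domination condition, and the explicit two-sided extremum argument) only make explicit steps the paper leaves implicit.
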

\begin{proof} The  proof  of  Lemma  \ref{lem:dominated} uses  the same  argument in the  proof    for  the case of tangent vectors  of   $C^k$-diffeological  statistical models  \cite[Remark 2]{Le2020}, \cite[Corollary 3.3.2, p.77]{Bogachev2010},  \cite[Theorem 3.1, p. 142]{AJLS2017}.   Let $v = \p_t c(0)$, where $c: (-\eps, \eps) \to (\Pp_\Xx, \Dd_\Xx)$ is a  weak  $C^k$-map.  Let $A \in \Sigma_\Xx$  such that $c(0)  (A) = 0$.   Since the map  $ I_A: \Ss(\Xx) \to \R, \:  \mu \mapsto  \mu (A),$  is a linear  bounded  map, the map   $ I_A \circ c:   \to \R$ is a  $C^1$-map, see e.g.  \cite[Proposition C.2, p. 385]{AJLS2017}. It follows that
$$ \frac{d}{dt}_{| t =0}  I_A\circ  c(t) = I_A (v) =0$$
since  $I_A \circ  c(t) \ge  0$. Hence $ v\ll  c(0)= \xi$. This completes  the proof  of  Lemma \ref{lem:dominated}.
\end{proof}
  Lemma  \ref{lem:dominated}  implies  that  for  any tangent vector $v$   at   a point $\xi$  of a   weakly $C^k$-diffeological statistical model $(\Pp_\Xx, \Dd_\Xx)$,
{\it  the logarithmic representation of $v$}
\begin{equation}
\label{eq:logrep}
\log v: =dv/d\xi
\end{equation}
is an element  of $L^1(\Xx,\xi)$.
The  set   $\{ \log  v|\:  v\in C_\xi (\Pp_\Xx, \Dd_\Xx)\}$  is a  subset  in
$L^1(\Xx, \xi)$. We denote      it   by    $\log (C_\xi (\Pp_\Xx, \Dd_\Xx))$ and  will  call  it {\it the logarithmic representation   of $C_\xi (\Pp_\Xx, \Dd_\Xx)$}. In  \cite[Definition 3.6, p. 152]{AJLS2017}, for a   $C^1$-map $c : (0,1)\to \Pp_\Xx \subset \Ss(\Xx)$  we call  $dc (\p t)/dc(t)$ the logarithmic derivative  of $c$ in the direction  $\p t\in T_t (0,1)$, since  in the classical  case where  $c (t)  =   f(t) \cdot \mu_0$  is a  dominated measure family with differentiable  density  function $f(t)$, then
$ dc (\p t ) / d c(t)  =   (d/dt)\log   f(t)$.

 \begin{remark}\label{rem:weak}
 Any bounded  function  $H$ on  $\Xx$  defines  a  continuous   linear   function  $I_H$  on  the Banach  space $\Ss(\Xx)_{TV}$ as follows
 $$I_H : \Ss(\Xx)_{TV} \to \R, \: \mu \mapsto \int_\Xx H d\mu.$$
 Assume  that     a map  $\varphi: (0,1)  \to  \Pp (\Xx), \:  t \mapsto \mu_t,$   is weakly differentiable.  Let  $\mu'_t : = \p_t (\varphi (t)) \in \Ss (\Xx)$. Then  we have
 \begin{equation}\label{eq:exchange}
 {d\over dt|}_{t =0}\int_\Xx H d\mu_t  = \int _\Xx  H d (\mu'_0).
 \end{equation}
The  identity  (\ref{eq:exchange})  is  central  for many applications, see e.g.  \cite{Pflug1996} and Remark \ref{rem:cr},  and  therefore  the concept of  weakly $C^k$-diffeological  statistical  models is useful.
Note that measure  valued  weak differentiable  maps  from an open  subset of $\R^n$ have been  first introduced  by  Pflug  \cite{Pflug1988}, see  also \cite[Definition 3.25, p. 158]{Pflug1996}   in the case  $\Xx$ is   a metric  space with Borel $\sigma$-algebra, using (\ref{eq:exchange})  as the definition (with  $H$  bounded and continuous).
 \end{remark}

\begin{definition}\label{def:2-integrable}{\cite[Definition 4]{Le2020}}
A $C^k$-diffeological statistical model $(\Pp_\Xx,\Dd_\Xx)$ will be called {\it almost $2$-integrable}, if $\log\bigl(C_\xi(\Pp_\Xx,\Dd_\Xx)\bigr)\subset L^2(\Xx,\xi)$ for all $\xi\in\Pp_\Xx$. In this case we define {\it the diffeological Fisher metric $\g$ on $\Pp_\Xx$} as follows. For each $v,w\in C_\xi(\Pp_\Xx,\Dd_\Xx)$  we set
	
\begin{equation}\label{eq:rFisher}
\g_\xi(v,w):=\la\log v,\log w\ra_{L^2(\Xx, \xi)}=\int_\Xx\log v\cdot\log w\,d\xi.
\end{equation}

The  Fisher  metric  on  $C_\xi (\Pp_\Xx,\Dd_\Xx)$  extends naturally to  a  positive quadratic form
on $T_\xi (\Pp_\Xx, \Dd_\Xx)$, which  is also  called   the Fisher metric.
	
An almost 2-integrable  $C^k$-diffeological  statistical model $(\Pp_\Xx, \Dd_\Xx)$ will be  called {\it 2-integrable},  if   for  any $C^k$-map $\pb: U \to P_\Xx$ in $\Dd_\Xx$,
the function  $v\mapsto\bigl|d\pb(v)\bigr|_\g$ is continuous  on $TU$. %, i.e.,  for  any parametrization $P: U \to \Pp_\Xx$ the function $ v \mapsto    |dP (v)|_\g$ is continuous on $TU$.
\end{definition}

\begin{remark}\label{rem:wfisher}  (1) As in Definition \ref{def:2-integrable}, we  shall   say  that  a   weakly $C^k$-diffeological statistical model  is  almost 2-integrable,   if   we can define  the Fisher metric  on its  tangent     cone as in (\ref{eq:rFisher}). We shall say  that   an almost  2-integrable  weakly $C^k$-diffeological  statistical  model   $(\Pp_\Xx, \Dd_\Xx)$ is {\it 2-integrable},  if   for  any weak $C^k$-map $\pb: U \to \Pp_\Xx$ in $\Dd_\Xx$, the function $v \mapsto |d\pb(v)|_\g$ is continuous  on $TU$.
	
(2) On  $C^k$-diffeological spaces, in particular on (weakly) $C^k$-diffeological   statistical  models  $(\Pp_\Xx, \Dd_\Xx)$, we can define  the notion of  $C^k$-functions.  If the  dimension of its  tangent  spaces $T_\xi (\Pp_\Xx, \Dd_\Xx)$ is finite  for all $\xi \in \Pp_\Xx$, then we can define  the  notion of   a gradient  of a $C^k$-differentiable  function on $(\Pp_\Xx, \Dd_\Xx)$.
\end{remark}

\begin{example}\label{ex:normalm}  Let us consider    an example  of  a   2-integrable  $C^\infty$-diffeological    statistical model   which is  the image of    a   parameterized  statistical model   $(W, \R, \pb = p \cdot \mu_0)$  where
	$$W =\{ ( a, b) \in \R^2 |\,  a \in [0,1],  b \in \R \}, $$
	 $\mu_0$ is the Lebesgue measure  on $\R$,  and
	$$ p ( x|  a, b ) : = \frac{(1 -a)  e^{ - x^2 /2}  + a   e ^{ - (x -b) ^2 /2}}{\sqrt {2\pi}} .$$
	This  family is a typical  example   of  Gaussian mixture models  \cite[Example 1.2, p. 14]{Watanabe2009}, which comprise  also  the changing time  model  (the Nile River model) and the ARMA model  in time series \cite[\S 12.2.6, p. 311]{Amari2016}.  We   decompose  $W$ as    a disjoint union   of its  subsets
	as follows
	$$  W = W_{-} \cup W_0 \cup W_+$$
where
\begin{gather*}
W_-=\bigl\{(a,b)\in W\mid a\in(0,1),\,b<0\bigr\},\\
W_0=\bigl\{(a,b)\in W\mid a\in(0,1)\ \&\ b=0\text{ or }a=0\ \&\ b\in\R\bigr\},\\
W_+=\bigl\{(a,b)\in W\mid a\in(0,1),\,b>0\bigr\}.
\end{gather*}
The restriction of $\pb$ to $W_-\cup W_+$ is injective, and $\pb(W_0)=\pb(0,0)$. We compute
\begin{gather*}
\p_ap(x|a,b)=\frac{-e^{-x^2/2}+e^{-(x-b)^2/2}}{\sqrt{2\pi}},\\
\p_bp(x|a,b)=\frac{a(x-b)e^{-(x-b)^2/2}}{\sqrt{2\pi}}.
\end{gather*}

In \cite[p. 14]{Watanabe2009},  using the  expression of the Fisher metric    via the  Kullback-Leibler divergence,  Watanabe  showed  that  the Fisher  metric  on   the  parameterized  statistical model $(W, \R, \pb = p \cdot \mu_0)$    exists and  is continuous. It  follows that  $(\pb_*(W), \pb_*(\Dd^k_{can}))$  is a 2-integrable     $C^k$-diffeological statistical model  for any $k \in \N$.
\end{example}

\begin{lemma}\label{lem:incl}
The     statistical    model $\pb_*(W)$ has  two different    $C^1$-diffeologies   $\pb_* (\Dd_{can}^1)$  and  $i^*(\Dd^1_{can})$.
\end{lemma}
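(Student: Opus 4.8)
The plan is to exhibit an explicit $C^1$-map into $\pb_*(W)$ which belongs to one of the two diffeologies but not the other, thereby showing the diffeologies are genuinely distinct as sets of plots. The natural candidate is a plot that detects the ``folding'' along $W_0$: since $\pb$ identifies all of $W_0$ to the single point $\pb(0,0)$ while $\pb|_{W_-\cup W_+}$ is injective, a smooth curve in $\R^2$ that crosses $W_0$ transversally will, after composing with $\pb$, give a curve in $\pb_*(W)$ that is a $C^1$-map for the pushforward diffeology $\pb_*(\Dd^1_{can})$ essentially by construction, but whose image in $\Ss(\Xx)$ need not be $C^1$ as a map into the Banach space — or conversely, one can produce a map that is manifestly $C^1$ into $\Ss(\Xx)$ (hence in $i^*(\Dd^1_{can})$) but which cannot be written, even locally, as $\pb\circ\mathbf{q}$ for any $C^1$ map $\mathbf{q}$ into $W$. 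I would pursue the latter direction: it is cleaner to check membership in $i^*(\Dd^1_{can})$ (just differentiate the explicit density $p(x|a,b)$ in the total variation norm) than to verify the somewhat awkward local condition in Example \ref{ex:wcan}(3) defining the pushforward.

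Concretely, first I would record that $\pb_*(\Dd^1_{can}) \subseteq i^*(\Dd^1_{can})$: if $\mathbf{c}: U \to \pb_*(W)$ satisfies, locally around each $u$, either $\mathbf{c}|_{O(u)}$ constant or $\mathbf{c}|_{O(u)} = \pb\circ\mathbf{q}$ with $\mathbf{q}\in\Dd^1_{can}(O(u),W)$, then since $i\circ\pb: W\to\Ss(\Xx)$ is $C^1$ and composition of $C^1$-maps is $C^1$, we get $i\circ\mathbf{c}\in\Dd^1_{can}$ locally, hence globally by axiom D2. So it suffices to find a plot in $i^*(\Dd^1_{can})$ that is \emph{not} in $\pb_*(\Dd^1_{can})$. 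Here I would consider a curve through the singular value $\xi_0 := \pb(0,0)$: take $c:(-\eps,\eps)\to\pb_*(W)$, $c(t) := \pb(a(t),b(t))$ for a carefully chosen pair $(a(t),b(t))$, say with $a(t)$ staying in $(0,1)$ and $b(t)$ chosen so that the composite $t\mapsto p(\cdot\mid a(t),b(t))\cdot\mu_0$ is $C^1$ in total variation, while the curve $t\mapsto(a(t),b(t))$ in $W$ is forced to pass through $b=0$ (landing on $W_0$) in a non-$C^1$ way, or more robustly: exploit that near $\xi_0$ two distinct ``branches'' $W_+$ and $W_-$ map to the same point $\xi_0$, so a plot that oscillates between the $\pb(W_+)$-side and the $\pb(W_-)$-side — e.g.\ parameter $b(t) = t^2\sin(1/t)$, $b(0)=0$, together with a fixed $a\in(0,1)$ — yields $i\circ c$ which is $C^1$ into $\Ss(\Xx)$ (because $p(x|a,b)$ is smooth in $b$ and one composes a smooth Banach-valued map with the $C^1$ scalar function $b(t)$), whereas any local lift $\mathbf{q}$ of $c$ through $\pb$ would have to recover $b(t)$ up to sign, and no choice of sign makes $b(t) = \pm t^2\sin(1/t)$ extend to a $C^1$ function through $0$ unless the branch-switching can be absorbed — a case analysis on the structure of $\pb^{-1}$ near $\xi_0$ rules this out.

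The main obstacle is the last step: verifying that the chosen plot $c$ admits \emph{no} local $C^1$-lift through $\pb$ (and is not locally constant), since the definition of the pushforward diffeology allows the lift $\mathbf{q}$ to vary from point to point and only requires it locally. I would handle this by analyzing the fibers of $\pb$: away from $\xi_0$ the map $\pb$ is a local diffeomorphism onto its image (the density depends real-analytically and injectively on $(a,b)$ on $W_-\cup W_+$), so any local lift near a non-singular point is unique and determined by $c$; hence a lift defined near $t=0$ but punctured at $0$ is forced to be a specific $C^1$ curve $(a(t),b(t))$ on $(-\eps,0)\cup(0,\eps)$, and the obstruction is purely the behavior as $t\to 0$, which reduces to checking that the forced $b(t)$ — reconstructed from the second moment or some linear functional of $c(t)$ — fails to be $C^1$ at $0$ while $c$ itself is. Once the fiber structure near $\xi_0$ is pinned down, this is a finite computation with the explicit Gaussian-mixture density, and the fact that $c(0)=\xi_0$ but $c$ is not locally constant (which is immediate since $c(t)\neq\xi_0$ for $t\neq 0$) finishes the argument.
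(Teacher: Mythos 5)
Your overall strategy---observe that $\pb_*(\Dd^1_{can})\subset i^*(\Dd^1_{can})$ and then exhibit a curve belonging to the second diffeology but not the first---is exactly the paper's, but your concrete candidate fails, and the way it fails points at the idea you are missing. With $a\in(0,1)$ fixed, the partial derivative $\p_b p(x|a,0)=\frac{axe^{-x^2/2}}{\sqrt{2\pi}}$ is a nonzero element of $L^1(\R)$, so $b\mapsto p(\cdot|a,b)\,\mu_0$ is an immersion of a neighbourhood of $b=0$ into $\Ss(\R)_{TV}$; pairing with a bounded function $H$ such that $\int H\,\p_bp(x|a,0)\,dx\neq0$ shows that $t\mapsto\pb\bigl(a,b(t)\bigr)$ is $C^1$ in total variation \emph{if and only if} $b(t)$ is $C^1$. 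Since $b(t)=t^2\sin(1/t)$ is differentiable but not $C^1$ at $0$, your curve $i\circ c$ is not $C^1$ into $\Ss(\Xx)$, so it fails to lie in $i^*(\Dd^1_{can})$ for the very same reason that it admits no $C^1$ lift; it therefore distinguishes nothing. Your picture of the singularity is also off: $W_+$ and $W_-$ are not two branches folded over $\xi_0$ --- $\pb$ is injective on $W_-\cup W_+$, and it is the whole arc $W_0$ that collapses to the single point $\xi_0$, so there is no ``sign of $b$'' ambiguity to exploit.

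The idea actually needed is to run the curve into the corner $(0,0)$, where \emph{both} partials $\p_ap(x|0,0)$ and $\p_bp(x|0,0)$ vanish, i.e.\ where $d\pb$ degenerates completely. The paper takes $c(t)=\pb\bigl(\alpha(t),\beta(t)\bigr)$ with $\beta(t)=t\log(t^2)$ (continuous but not $C^1$ at $0$, with $\dot\beta\to-\infty$) and $\alpha(t)=\int_0^t d\tau/\log(\tau^2)$ (tending to $0$), so that in the formula for $\dot c(t)$ the blow-up of $\dot\beta$ appears multiplied by the vanishing coefficient $\alpha(t)$ and the composite converges in $\Ss(\R)$ to the nonzero limit $xe^{-x^2/2}/\sqrt{2\pi}$; hence $c$ is a $C^1$-plot for $i^*(\Dd^1_{can})$ with $\dot c(0)\neq0$, whereas the paper argues that no curve in the pushforward diffeology can produce such a tangent vector at $\pb(0,0)$ (any lift is pinned down off $W_0$, by injectivity of $\pb$ on $W_-\cup W_+$, to the non-$C^1$ path $(\alpha,\beta)$). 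Without basing the curve at a genuine degeneracy of $d\pb$, no choice of $b(t)$ will separate the two diffeologies, so the gap in your proposal is the construction itself, not merely the final no-lift verification you flagged as the main obstacle.
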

\begin{proof} Since  $\p _a p (x | 0, 0) = \p_b p (x|0,0)$  we have  $ T_{\pb (0,0)} (\pb (W), \pb_* (\Dd_{can}^1))= \{0\}$.
Now   we shall  show  that  $T_{\pb (0,0)} (\pb (W), i^*(\Dd_{can} ^1))$  contains   a nonzero  vector.
Let  us  consider  a $C^1$-curve $c: (-1, 1) \to \pb (W) \stackrel{i}{\to}  \Ss (\R)$ defined  as follows
\begin{equation*}%label{eq:c1}
c(t) : = \frac{ (1 - \alpha(t)) e^{- x^2 /2} +  \alpha (t) e ^{- ( x - \beta  (t)) ^2 /2}
}{\sqrt{2\pi}},
\end{equation*}
where  $\alpha (t), \beta(t)$  are   the following   functions  on $(-1, 1)$:
\begin{eqnarray*}
\alpha  (t) = \int_0 ^ t\frac{ d\tau}{\log (\tau ^2)} \text{  for }  t \not= 0  \text{ and }  \alpha (0) = 0,\\ %\label{eq:alpha}\\
\beta(t)=  t \log (t^2) \text{ for }   t \not = 0 \text{ and  } \beta (0) = 0.%\label{eq:beta}
\end{eqnarray*}
Clearly  $\alpha, \beta$  are   continuous functions. Moreover
$\alpha$  is a $C^1$-function  and $\beta$  is  a $C^1$-function  outside the point  $0 \in (-1, 1)$  and
\begin{equation*}
\dot c(t) = \frac{-\dot \alpha(t) e ^{ - x^2 /2} + \dot \alpha (t) e^{- (x - \beta(t))^2 /2} + \alpha (t) (x- \beta(t))\dot \beta(t) e^{- (x-\beta(t))^2/2}}{\sqrt{2\pi}}
\end{equation*}
Since
$$
\dot\alpha(t)=\frac{1}{\log(t^2)},\ \ \dot\beta(t)=\log(t^2)+\frac{2t^2}{\log(t^2)},
$$
 we have
 $$\lim_{ t \to 0}  \dot  c (t) =  \frac{ x e ^{-x^2 /2}}{\sqrt{2\pi}}.$$
 This  implies that  $ c(t)$ is a   $ C^1$-curve in   $\bigl(\pb(W),i^*(\Dd^1_{can})\bigr)$   and   $ c(0) = 0,  \dot   c(0)  \not = 0.$  This completes  the proof  of Lemma \ref{lem:incl}.
\end{proof}

\begin{example}\label{ex:friedrich}   Let $\Xx$ be a measurable space   and   $\lambda$  be    a $\sigma$-finite measure. In \cite[p. 274]{Friedrich1991} Friedrich considered
a  family  $P(\lambda):=\{  \mu \in \Pp(\Xx)|\, \mu \ll   \lambda\}$  that is endowed  with   the following  diffeology  $\Dd (\lambda)$.   A  curve $c: \R \to  P (\lambda)$ is a   $C^1$-curve, iff
$$\log\dot  c (t) \in  L^2 (\Xx, c(t)).$$
Hence $(P(\lambda),\Dd(\lambda))$ is     an  almost 2-integrable
$C^1$-diffeological statistical model, see  \cite[Example 10]{Le2020}.    Next we shall prove  that
$P(\lambda)$ is not a 2-integrable   $C^1$-diffeological   statistical model  for   $\Xx =  (-1, 1)$  and  $\lambda$  being the  Lebesgue  measure  $dx$.  It suffices  to show
a   $C^1$-curve $c:  (-1, 1) \to P (dx)$  such  that  $\dot  c (t) \in  L_2 (\Xx, c(t))$  for all $t \in (-1, 1)$  but  $|\dot c (t)| _\g$   is not continuous    at $ t =0$.    We shall  construct  such a curve  using \cite[Example 3.4, p.  155]{AJLS2017}. %, see  also Example \ref{ex:weakc1}.
First  we consider  a  smooth function
$f: [0,\infty) \to \R$     such that  %as in  (\ref{eq:ex155})
$$ f(u) >0, f' (u) < 0 \text{ for } u \in [0,1), \text{ and }   f (u)  = 0 \text{  for } u \ge 1.   $$
For  $t\in  (-1, 1)$ we define  $\pb :  (-1, 1) \to \Ss (\Xx), \: \pb  (t)  = p (t, x) dx, $  where
$$p (t, x) : = \begin{cases}
1  &  \text{ if }   x \le 0 \text{ and } t \in \R\\
|t|  f (\frac{x}{t})^2  \, dx   &  \text{ if }   x >0 \text{ and } t \not = 0   \\
0 &  \text{ otherwise }
\end{cases}
$$
Then  for all $ t \in (-1, 1)$  we have $\pb (t) (\Xx)= \| \pb (t)\|_{TV} \ge 1 $. By  op. cit.,
\begin{equation}\label{eq:est1}
| \pb (t) - \pb (0)|| _{TV} =  t^2 \int _0 ^1 f(u)  ^2  \, du \le    A
\end{equation}
for some   finite  constant  $A$, hence   $\|\pb (t)\|_{TV} \le 2A$  for all $t \in (-1, 1)$.  It has been   shown  ibid. that   $\pb: (-1, 1) \to \Ss(\Xx)$ is a    $C^1$-map.
Now  we set
$$  c(t) : = \frac{\pb (t)}{\| \pb (t)\|_{TV}}.$$
Then  $c: (-1, 1)  \to \Ss (\Xx)$   is  a $C^1$-curve lying on $\Pp (\Xx)$. By local. cit. we have
for  $t \not = 0$
$$\dot \pb (t) =  \chi_{(0,1)}(x) \sign  (t)   \Bigl (  f(u) ^2  -   2 u f f ' (u)\Bigl)|_{ u = x | t|  ^{-1} }  dt $$
and $\dot \pb (0) =0$.
It follows  that $\dot \pb (t) \in L^2 (\Xx, \pb (t))$. Furthermore we have
\begin{gather*}
\dot c(t)=\frac{\dot\pb(t)}{\|\pb(t)\|_{TV}}+\frac{\pb(t)(d/dt)\|\pb(t)\|_{TV}}{\|\pb (t)\|_{TV}^2},\\[5pt]
\|\dot c_t\|_\g^2=\int_\Xx\,\biggl|\frac{\dot c(t)}{c(t)}\biggr|^2c(t)\,dt=
\frac{1}{\|\pb(t)\|_{TV}}\int_\Xx\,\biggl|\frac{\dot\pb(t)}{\pb(t)}+
\frac{(d/dt)\|\pb(t)\|_{TV}}{\|\pb(t)\|_{TV}}\biggl|^2\pb(t)\,dt<\infty.
\end{gather*}
Thus   $\dot  c(t) \in L^2 (\Xx,   c(t))$  for all $ t \in (-1, 1)$  and  $\dot c (0) = 1$.
Since  $\lim _{t \to 0}\|\dot \pb (t)\|_{TV} = 0$, it follows
$$\lim_{t\to 0} \frac{ d} {dt}\| \pb (t)\|_{ TV } = 0.$$
Since $\|\pb (0)\|_{TV} = 1$, it  follows  that
$$\lim_{t \to 0}   \| \dot c_t\| _g  = \lim _{t \to 0} \| \dot \pb (t)\|_\g $$	which is  positive  by Ay-Jost-L\^e-Schwachh\"ofer's result loc. cit. This    proves  our claim  that   $P (dx)$  is  not a 2-integrable $C^1$-diffeological statistical model.
\end{example}	
	
We shall    use the  diffeological  Fisher metric  to define   the  Fisher   distance  on
 2-integrable      $C^k$-diffeological statistical models  $(\Pp_\Xx, \Dd_\Xx)$. Recall that
 $\Pp_\Xx$ is a topological  space  with   the strong topology  induced from the strong  topology  on the Banach space $\Ss(\Xx)$.

\begin{definition}\label{def:length}   Let $ (\Pp_\Xx, \Dd_\Xx)$ be  a  2-integrable   $C^k$-diffeological statistical  model.

(1) A   map $c: [a, b] \to   (\Pp_\Xx, \Dd_\Xx) $   will be  called  {\it   a   $C^k$-curve},  if   there  exists  $\eps >0$   and   a   $C^k$-map: $c_\eps: (a-\eps, b + \eps) \to  (\Pp_\Xx, \Dd_\Xx)$ such that  the restriction of $c_\eps$ to $[a,b]$ is   $c$.

(2)  A continuous map $ c: [0,1] \to (\Pp_\Xx, \Dd_\Xx)$   will be   called {\it  a piece-wise   $C^k$-curve}, if there  exists  a finite   number  of points   $0=  a_0< a_1 < a_2 \cdots  < a_m = 1$  such  that
 the  restriction  of $c$  to $[a_{i-1}, a_i]$ is   a     $C^k$-curve  for $i \in [1,m]$.

(3)  Let  $c:  [0,1]\to  (\Pp_\Xx,\Dd_\Xx)$   be  a  $C^k$-curve  connecting    $q_1, q_2 \in \Pp_\Xx$ such that $c(0) = q_1 $ and $c(1)= q_2$.   We define {\it the  length}  of   $c$  by
 $$L (c) =\int_0 ^1 |\dot  c (t)|_\g\, dt $$
where $|\cdot |_\g$  denotes  the length  defined  by the  diffeological Fisher metric $\g$.
 {\it The  length} of a piece-wise   $C^k$-curve   will be defined  as the  sum of the  lengths  of  its
 $C^k$-smooth  sub-intervals.

(4)  The     diffeological  Fisher  distance   $\rho_\g (x, y)$ between   two points  $x, y \in \Pp_\Xx$  will be defined as    the  infimum   of the   length  over   the      space of piece-wise $C^k$-curves  connecting    $x, y$. In particular,  if there is  no $C^k$-path  connecting $x, y$ then  $\rho_\g (x, y) = \infty$.
\end{definition}

\begin{theorem}\label{thm:distance}
The distance function $\rho_\g(x,y)$ is an extended metric, i.e., it can be infinite somewhere.
\end{theorem}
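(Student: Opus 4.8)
\emph{Proof sketch.}
The plan is to verify the three axioms of an extended metric — symmetry, the triangle inequality, and $\rho_\g(x,y)=0\iff x=y$ — admitting the value $+\infty$ throughout. The first three are formal. Reflexivity $\rho_\g(x,x)=0$ holds because, by axiom D1, the constant map at $x$ is a $C^k$-curve of length $0$. For symmetry, if $c\colon[0,1]\to(\Pp_\Xx,\Dd_\Xx)$ is a piece-wise $C^k$-curve from $x$ to $y$, then $\bar c(t):=c(1-t)$ is a piece-wise $C^k$-curve from $y$ to $x$ with $\dot{\bar c}(t)=-\dot c(1-t)$, so $|\dot{\bar c}(t)|_\g=|\dot c(1-t)|_\g$ and $L(\bar c)=L(c)$; hence the two families of lengths over which the infima defining $\rho_\g(x,y)$ and $\rho_\g(y,x)$ range coincide. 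For the triangle inequality, given piece-wise $C^k$-curves $c_1$ from $x$ to $y$ and $c_2$ from $y$ to $z$, their concatenation, reparametrized onto $[0,1]$, is a continuous and piece-wise $C^k$-curve from $x$ to $z$ whose length is $L(c_1)+L(c_2)$ by additivity of $L$ over subintervals; taking the infimum over $c_1$ and $c_2$ gives $\rho_\g(x,z)\le\rho_\g(x,y)+\rho_\g(y,z)$, the inequality being vacuous if either term on the right is infinite.

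The one substantive step is that $\rho_\g(x,y)=0$ forces $x=y$, which I would obtain by comparing $\rho_\g$ with the total variation distance on $\Pp(\Xx)\subset\Ss(\Xx)_{TV}$. The key is the pointwise estimate
$$\|\dot c(t)\|_{TV}\le|\dot c(t)|_\g$$
for any $C^k$-curve $c$ in $(\Pp_\Xx,\Dd_\Xx)$. Indeed, by Lemma \ref{lem:dominated} the tangent vector $\dot c(t)$ is dominated by the probability measure $c(t)$, so writing $h_t:=\log\dot c(t)=d\dot c(t)/dc(t)$ we have $\|\dot c(t)\|_{TV}=\int_\Xx|h_t|\,dc(t)$, while $|\dot c(t)|_\g^2=\int_\Xx h_t^2\,dc(t)$ by the definition of the Fisher metric; Cauchy--Schwarz with respect to the probability measure $c(t)$ then bounds $\int_\Xx|h_t|\,dc(t)$ by $\bigl(\int_\Xx h_t^2\,dc(t)\bigr)^{1/2}\bigl(\int_\Xx dc(t)\bigr)^{1/2}=|\dot c(t)|_\g$, using $c(t)(\Xx)=1$. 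Since $i\circ c$ is a $C^1$-map into the Banach space $\Ss(\Xx)_{TV}$, the fundamental theorem of calculus gives $c(b)-c(a)=\int_a^b\dot c(t)\,dt$, hence $\|c(b)-c(a)\|_{TV}\le\int_a^b\|\dot c(t)\|_{TV}\,dt\le\int_a^b|\dot c(t)|_\g\,dt$. Summing over the $C^k$-subintervals of a piece-wise $C^k$-curve $c$ from $x$ to $y$ and using the triangle inequality for $\|\cdot\|_{TV}$ yields $\|x-y\|_{TV}\le L(c)$, so $\|x-y\|_{TV}\le\rho_\g(x,y)$; in particular $\rho_\g(x,y)=0$ implies $\|x-y\|_{TV}=0$, i.e.\ $x=y$.

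The main obstacle is precisely this comparison with total variation, and inside it the verification that $\dot c(t)$ has an $L^1$-density with respect to $c(t)$ whose $L^1$-norm is controlled by the Fisher norm — this is exactly where Lemma \ref{lem:dominated} (domination of tangent vectors), almost $2$-integrability (which puts $h_t$ in $L^2(\Xx,c(t))$), and the normalization $c(t)(\Xx)=1$ all enter. The remaining points are routine: $2$-integrability makes $t\mapsto|\dot c(t)|_\g$ continuous, hence bounded on the compact interval, so $L(c)<\infty$ and all the displayed inequalities are between finite quantities whenever a connecting curve exists; and the diffeology axioms guarantee that the constant curves and the concatenations used above genuinely are (piece-wise) $C^k$-curves.
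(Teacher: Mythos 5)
Your proposal is correct and follows essentially the same route as the paper: the formal verification of symmetry, reflexivity and the triangle inequality, followed by the comparison $\rho_\g(x,y)\ge\|x-y\|_{TV}$ obtained from the pointwise bound $\|\dot c(t)\|_{TV}\le|\dot c(t)|_\g$ and the fundamental theorem of calculus in $\Ss(\Xx)_{TV}$, which is exactly the paper's Lemma \ref{lem:bound1}. In fact you make explicit the Cauchy--Schwarz step (using $c(t)(\Xx)=1$ and the domination of $\dot c(t)$ by $c(t)$) that the paper leaves implicit.
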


\begin{proof}      Clearly  $\rho_\g (x, y)$ is a   symmetric  nonnegative function and   $\rho_\g(x, y)$  satisfies the triangle   inequality. It remains  to show  that  $\rho_\g(x, y) =0 $  iff   $ x =y$.
Since  constant  maps belong  to $\Dd_\Xx$,  it follows   that $\rho_g (x, x) = 0$ for all $ x\in \Pp_\Xx$.
To prove  that  $\rho_\g (x, y)  = 0$   implies  $ x= y$,    it suffices  to prove the following

\begin{lemma}\label{lem:bound1}    For  any  $x, y \in \Pp_\Xx$  we have
$$  \rho_\g  (x, y) \ge  \|  x -y \|_{TV} .$$
%where  $\| \cdot  \|_{TV} $  denotes  the total  variation norm  on   $\Ss (\Xx)$.
\end{lemma}%If there is no a  piece-wise  $C^k$-curve joining these points, then $\rho_\g(x,y)=\infty>0$.
\begin{proof}
Let $\gamma:[a,b]\to\Pp_\Xx\subset\Ss(\Xx)$ be a  $C^k$-curve joining $x$ and $y$.
Since
$$\frac{d\dot \gamma (t)}{d\gamma (t)} \in L^2  (\gamma(t))$$
for all $t$, we have
\begin{multline*}
\|y-x\|_{TV}=\bigl\|\gamma(b)-\gamma(b)\bigr\|_{TV}=\biggl\|\int_a^b\dot\gamma(t)\,dt\biggr\|_{TV}\\
\le\int_a^b\bigl\|\dot\gamma(t)\bigr\|_{TV}\,dt\le\int_a^b\bigl|\dot\gamma(t)\bigr|_\g\,dt.
\end{multline*}
This    proves  Lemma  \ref{lem:bound1}   for  $C^k$-curves $\gamma$.

Next  we assume  that  $\gamma : [0,1] \to  \Pp_\Xx\to  \Ss (\Xx)$ is a   piece-wise   $C^k$-curve.   Combining   the previous   argument  and   the  triangle  inequality  for  the  total variation norm,        we   complete  the  proof    of  Lemma \ref{lem:bound1}  immediately.
\end{proof}
This completes  the   proof  of Theorem  \ref{thm:distance},
\end{proof}

\begin{remark}\label{rem:wdist}  Note that   Definition  \ref{def:length}  also works  for  weakly
$C^k$-diffeological statistical models, but the  proof  of  Lemma \ref{lem:bound1}  does not work for   weak   $C^1$-maps  $\gamma: [a,b] \to \Pp_\Xx \subset \Ss(\Xx)$. Since
any  weakly  differentiable map  $ \gamma: [a,b] \to \Ss(\Xx)$ is  a.e.   differentiable \cite[Theorem 3.2]{Kaliaj2016}, we conjecture  that  Lemma \ref{lem:bound1}  and      Theorem \ref{thm:distance} also  hold  for    weakly $C^k$-diffeological statistical models.
\end{remark}

Note  that  our  definition  of the diffeological Fisher  metric  and  the  diffeological Fisher distance  is   coordinate-free.
In the remainder  of this  section  we shall  show  the   naturality   of the   diffeological  Fisher metric  and  the  diffeological   Fisher  distance, using the   language  of probabilistic morphisms.

In 1962 Lawvere   proposed  a categorical approach to     probability theory,  where  morphisms  are  Markov kernels, and   most importantly, he supplied  the space $\Pp(\Xx)$  with  a natural  $\sigma$-algebra   $\Sigma_w$, making the notion of Markov kernels  and hence
many  constructions  in  probability theory  and  mathematical statistics functorial \cite{Lawvere1962}.

Let us recall the definition of $\Sigma_w$.  Given a measurable space $\Xx$,  let $\Ff_s(\Xx)$ denote
the  linear space of simple functions on $\Xx$. %Recall that $\Ss(\Xx)$ is the  space of all signed finite  measures on $\Xx$.
There  is a natural  homomorphism $I: \Ff_s(\Xx) \to \Ss^*(\Xx):= \Hom\bigl(S(\Xx),\R\bigr)$, $f\mapsto I_f$,
defined  by  integration:  $I_f (\mu):= \int_\Xx f d\mu$  for $f \in \Ff_s(\Xx)$ and $\mu \in \Ss(\Xx)$.
Following Lawvere  \cite{Lawvere1962},  we  define $\Sigma_w$ to be  the smallest $\sigma$-algebra  on $\Ss(\Xx)$  such that $I_f$  is measurable  for all $f\in \Ff_s (\Xx)$.   Let  $\Mm(\Xx)$ denote  the space  of all finite  nonnegative measures on $\Xx$. We also  denote  by $\Sigma_w$  the restriction  of  $\Sigma_w$
to $\Mm (\Xx)$, $\Mm^* (\Xx): = \Mm(\Xx) \setminus \{0\}$,  and  $\Pp (\Xx)$.

\begin{definition}\label{def:prob}{\cite[Definition 1]{JLT2021}} {\it A probabilistic  morphism}
(or  {\it an arrow}) 	from   a measurable space $\Xx$ to a measurable  space $\Yy$  is
an	measurable  mapping  from  $\Xx$ to  $\bigl(\Pp(\Yy),\Sigma_w\bigr)$.
\end{definition}

We shall  denote by $\overline{T}\:\Xx\to\bigl(\Pp(\Yy),\Sigma_w\bigr)$  the measurable  mapping  defining/generating a probabilistic  morphism $T: \Xx \leadsto \Yy$.
Similarly, for a measurable mapping $\pb: \Xx \to \Pp(\Yy)$  we shall denote by $\underline{\pb}: \Xx \leadsto \Yy$ the generated probabilistic  morphism.   Note that    a   probabilistic  morphism  is  denoted  by a curved  arrow  and  a measurable mapping by a straight arrow.

From  now on  we    shall  always assume  that $\Pp (\Xx)$ is a  measurable  space   with  the  $\sigma$-algebra $\Sigma_w$.  Let $\delta_x  \in \Pp (\Xx)$ denote  the  Dirac  measure  concentrated  at $x$ on  $\Xx$.
Giry proved that the inclusion  $i : \Xx \to \Pp (\Xx), \, x \mapsto \delta_x,$  is  a measurable  mapping
\cite{Giry1982}. It follows  that   any  measurable  mapping  $\kappa: \Xx \to \Yy$   assigns
a  probabilistic   morphism $\underline{i \circ  \kappa}: \Xx \leadsto \Yy$, which   we  shall write  shorthand  as $\underline{\kappa}: \Xx \leadsto \Yy$.  Hence  the set of probabilistic mappings between  $\Xx$  and $\Yy$ contains  a subset  of  measurable mappings between  $\Xx$ and $\Yy$.

Given  a    probabilistic  mapping   $T:  \Xx \leadsto \Yy$,   we define   a   linear  map   $S_*(T): \Ss(\Xx) \to \Ss(\Yy)$, called {\it Markov morphism},
as follows  \cite[Lemma 5.9, p. 72]{Chentsov1972}
\begin{equation}\label{eq:markov1}
S_*(T) (\mu) (B): = \int_{\Xx}\overline T (x) (B)d\mu (x)
\end{equation}
for any    $\mu \in \Ss (\Xx)$  and  $B \in \Sigma_\Yy$.
We also denote by $T_*$  the map  $S_*(T)$ if no  confusion can arise. It is known  that
$T_*(\Pp (\Xx) ) \subset \Pp (\Yy)$ \cite[Proposition 1]{Le2020}. Abusing notation, given  a     probabilistic mapping $ T: \Xx \leadsto \Yy$  and a   $C^k$-diffeological  statistical
model $(\Pp_\Xx, \Dd_\Xx)$ we define a    $C^k$-diffeological  space  $(T_*(\Pp_\Xx),  T_*(\Dd_\Xx))$    as the image of $(\Pp_\Xx, \Dd_\Xx)$   under   the smooth map $T_*: \Pp(\Xx) \to \Pp (\Yy)$.

Diffeological (almost/$2$-integrable) statistical models are preserved under probabilistic morphisms.

\begin{proposition}{\cite[Theorem 1]{Le2020}}\label{prop:preser}
Let $T\:\Xx\leadsto\Yy$ be a probabilistic morphism and $(\Pp_\Xx,\Dd_\Xx)$ a $C^k$-diffeological statistical model.
\begin{enumerate}
\item Then $\bigl(T_*(\Pp_\Xx),T_*(\Dd_\Xx)\bigr)$ is a $C^k$-diffeological statistical model.
\item If $(\Pp_\Xx,\Dd_\Xx)$ is an almost $2$-integrable $C^k$-diffeological statistical model, then $\bigl(T_*(\Pp_\Xx),T_*(\Dd_\Xx)\bigr)$ is also an almost $2$-integrable $C^k$-diffeological statistical model.
\item If $(\Pp_\Xx,\Dd_\Xx)$ is a $2$-integrable $C^k$-diffeological statistical model, then $\bigl(T_*(\Pp_\Xx),T_*(\Dd_\Xx)\bigr)$ is also a $2$-integrable $C^k$-diffeological statistical model.
\end{enumerate}
\end{proposition}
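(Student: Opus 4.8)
The plan is to deduce all three parts from two observations: the Markov morphism $S_*(T)\colon\Ss(\Xx)\to\Ss(\Yy)$ is a bounded linear operator — it preserves the total mass of a nonnegative measure, hence contracts $\|\cdot\|_{TV}$ after passing to Jordan components — so composing a $C^k$-map into $\Ss(\Xx)$ with $S_*(T)$ produces a $C^k$-map into $\Ss(\Yy)$ \cite[Proposition C.2, p. 385]{AJLS2017}; and the coupling of the kernel $\overline T$ against a base measure represents $S_*(T)$, on logarithmic representations, by a conditional expectation. For Part (1), recall that by the definition of the pushforward diffeology (Example \ref{ex:wcan}(3)) every plot $\pb\colon V\to T_*(\Pp_\Xx)$ of $T_*(\Dd_\Xx)$ is, on a neighbourhood $O$ of any given point of $V$, either constant or of the form $T_*\circ{\bf q}$ with ${\bf q}\in\Dd_\Xx$. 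In the first case $i\circ\pb|_O$ is constant; in the second, $i\circ\pb|_O=S_*(T)\circ i\circ{\bf q}$ is the composite of the $C^k$-map $i\circ{\bf q}$ with the smooth operator $S_*(T)$. Thus $i\circ\pb|_O\in\Dd_{can}^k$ in either case, hence $i\circ\pb\in\Dd_{can}^k$ by locality D2; together with $T_*(\Pp(\Xx))\subset\Pp(\Yy)$ \cite[Proposition 1]{Le2020} this proves Part (1).

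For Part (2), by Part (1) the image is already a $C^k$-diffeological statistical model, so only almost $2$-integrability needs checking. Fix $\eta\in T_*(\Pp_\Xx)$ and $w=\p_t c(0)\in C_\eta(T_*(\Pp_\Xx),T_*(\Dd_\Xx))$. Using the local structure of the pushforward diffeology as above and the fact that a bounded linear operator is its own differential, we may write $w=S_*(T)(v)$ for some $v\in C_\xi(\Pp_\Xx,\Dd_\Xx)$ with $T_*\xi=\eta$ (a constant $c$ giving $w=0$), and $h:=\log v\in L^2(\Xx,\xi)$ by almost $2$-integrability of $(\Pp_\Xx,\Dd_\Xx)$. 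Let $\Pi$ be the probability measure on $\Xx\times\Yy$ with $\Pi(A\times B):=\int_A\overline T(x)(B)\,d\xi(x)$; its $\Xx$- and $\Yy$-marginals are $\xi$ and $\eta=S_*(T)\xi$. A direct computation from $v=h\cdot\xi$ gives $S_*(T)(v)(B)=\int_B\E_\Pi[\widetilde h\mid\Yy]\,d\eta$ with $\widetilde h(x,y):=h(x)$, i.e. $\log w=\E_\Pi[\widetilde h\mid\Yy]$; since conditional expectation is an orthogonal projection on $L^2(\Pi)$ and $\widetilde h\in L^2(\Pi)$,
\[
\|\log w\|_{L^2(\Yy,\eta)}=\bigl\|\E_\Pi[\widetilde h\mid\Yy]\bigr\|_{L^2(\Pi)}\le\|\widetilde h\|_{L^2(\Pi)}=\|\log v\|_{L^2(\Xx,\xi)}<\infty.
\]
Hence $\log\bigl(C_\eta(T_*(\Pp_\Xx),T_*(\Dd_\Xx))\bigr)\subset L^2(\Yy,\eta)$, and since $v\mapsto dv/d\eta$ is linear this persists on $T_\eta$; thus the diffeological Fisher metric is defined on the image, which is Part (2). (The displayed inequality is exactly the monotonicity of the Fisher norm under $S_*(T)$.)

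For Part (3) one must in addition show that $v\mapsto|d\pb(v)|_\g$ is continuous on $TV$ for every plot $\pb$ of $T_*(\Dd_\Xx)$. Continuity being local on $V$ and trivial for constant plots, assume $\pb=T_*\circ{\bf q}$ on a connected neighbourhood $O$ with ${\bf q}\in\Dd_\Xx$; shrinking $O$ and applying Lemma \ref{lem:cdominated} gives $\mu_0\in\Pp(\Xx)$ dominating all ${\bf q}(u)$, whence $\nu_0:=S_*(T)\mu_0$ dominates all $\pb(u)$. Writing ${\bf q}(u)=p_u\mu_0$ and $d{\bf q}_u(v)=\dot p_{u,v}\mu_0$ (tangent vectors being dominated by their base point, Lemma \ref{lem:dominated}), and coupling $\mu_0$ with $\overline T$ as above, one computes
\[
|d\pb_u(v)|_\g^2=\int_\Yy\frac{\bigl(\E[\dot p_{u,v}\mid y]\bigr)^2}{\E[p_u\mid y]}\,d\nu_0(y),\qquad |d{\bf q}_u(v)|_\g^2=\int_\Xx\frac{\dot p_{u,v}^2}{p_u}\,d\mu_0.
\]
The conditional Jensen inequality for the jointly convex function $(a,b)\mapsto a^2/b$ bounds the first integrand by $\E[\dot p_{u,v}^2/p_u\mid y]$, whose $\nu_0$-integral equals $|d{\bf q}_u(v)|_\g^2$, a function jointly continuous in $(u,v)$ by $2$-integrability of $(\Pp_\Xx,\Dd_\Xx)$. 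Since $i\circ{\bf q}$ is $C^k$, the maps $(u,v)\mapsto\dot p_{u,v}$ and $u\mapsto p_u$ are continuous into $L^1(\mu_0)$, and hence so are their conditional expectations into $L^1(\nu_0)$; extracting a subsequence along which $p_u,\dot p_{u,v}$ and these conditional expectations converge almost everywhere, and combining Scheff\'e's lemma with the generalized dominated convergence theorem (with dominating family $\E[\dot p_{u,v}^2/p_u\mid y]$), we conclude that $(u,v)\mapsto|d\pb_u(v)|_\g^2$, and therefore $v\mapsto|d\pb_u(v)|_\g$, is continuous, which completes Part (3).

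Parts (1) and (2) should be routine once one notes that $S_*(T)$ is a smooth contraction and introduces the coupling $\Pi$. The main obstacle is the limiting step in Part (3): the densities $p_u$ and $\dot p_{u,v}$ vary continuously only in $L^1(\mu_0)$, whereas the quantity $\dot p_{u,v}^2/p_u$ that controls the Fisher norm is genuinely nonlinear and not a priori uniformly integrable, so the continuity of the $\Yy$-side Fisher form cannot be obtained by a soft argument but must be extracted via the convexity bound together with a careful almost-everywhere-convergence-plus-Scheff\'e argument, with attention to the null sets $\{p_{u_0}=0\}$ and $\{\E[p_{u_0}\mid y]=0\}$.
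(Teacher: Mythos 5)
The paper itself offers no proof of Proposition \ref{prop:preser} --- it is quoted verbatim from \cite[Theorem 1]{Le2020} --- so there is no internal argument to compare against; I therefore assess your proof on its own terms. Its architecture is correct and is the natural one: part (1) follows from the local description of the pushforward diffeology in Example \ref{ex:wcan}(3) together with the fact that $S_*(T)$ is a bounded (indeed TV-contractive) linear operator, and part (2) from the identification of $\log\bigl(S_*(T)v\bigr)$ with the conditional expectation $\E_\Pi[\widetilde{\log v}\mid\Yy]$ under the coupling $\Pi$, plus $L^2$-contractivity of conditional expectation; this is precisely the mechanism the paper itself invokes in Remarks \ref{rem:winv} and \ref{rem:mono}, and both parts are sound.

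The one genuinely delicate point is the limiting step in part (3), and you have located it but not closed it: on the set $A=\{p_{u_0}=0\}$, which need \emph{not} be $\mu_0$-null, the quotients $\dot p_{u_n,v_n}^2/p_{u_n}$ need not converge pointwise to $\dot p_{u_0,v_0}^2/p_{u_0}=0$ (take $p_{u_n}=n^{-2}$, $\dot p_{u_n,v_n}=n^{-1}$), so ``almost-everywhere convergence plus Scheff\'e'' does not apply verbatim on all of $\Xx$. The gap is fixable by a squeeze rather than by bare Scheff\'e: Fatou on $\Xx\setminus A$ gives $\liminf_n\int_{\Xx\setminus A}\dot p_{u_n,v_n}^2/p_{u_n}\,d\mu_0\ge|d\mathbf{q}_{u_0}(v_0)|_\g^2$, while $2$-integrability of the source gives convergence of the integrals over all of $\Xx$ to the same value; hence $\int_A\dot p_{u_n,v_n}^2/p_{u_n}\,d\mu_0\to0$, and Scheff\'e applied on $\Xx\setminus A$ alone then yields $L^1(\mu_0)$-convergence of $\dot p_{u_n,v_n}^2/p_{u_n}$. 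From there your chain resumes: the conditional-expectation operator is an $L^1(\mu_0)\to L^1(\nu_0)$ contraction, so the dominating functions $G_n=\E[\dot p_{u_n,v_n}^2/p_{u_n}\mid y]$ converge in $L^1(\nu_0)$ and almost everywhere along a subsequence, and the generalized dominated convergence theorem applies (the analogous null-set issue on $\{\E[p_{u_0}\mid y]=0\}$ is handled by Lemma \ref{lem:dominated} applied to the image curve, which forces $\E[\dot p_{u_0,v_0}\mid y]=0$ there $\nu_0$-a.e.). With this insertion the proof is complete; without it, the claimed almost-everywhere convergence of the dominating family is unjustified on a possibly non-negligible set.
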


\begin{remark}\label{rem:winv}  Proposition \ref{prop:preser}  also holds  for  weakly $C^k$-diffeological statistical  models, because  the  transformation $T_* : \Pp(\Xx )\to \Pp (\Xx)$, where  $T :  \Xx \leadsto \Yy$  is a   probabilistic morphism,  is  the restriction of a  linear  bounded  map $T_* = S_*(T)$  from  $\Ss (\Xx)$ to itself.
\end{remark}

Furthermore, the diffeological Fisher  metric (and hence  the diffeological  Fisher  distance)   is decreasing under    probabilistic   morphisms   and       invariant under sufficient    probabilistic  morphisms. Denote by $L(\Xx)$
the space of bounded
measurable functions on $\Xx$. Recall   that   a  probabilistic  morphism $T: \Xx \leadsto \Yy$    is called {\it sufficient  for $\Pp_\Xx$}  if  there exists a  probabilistic
morphism $\underline \pb : \Yy \leadsto \Xx$ such that  for all $\mu \in \Pp_\Xx$   and  $h \in L(\Xx)$ we have (\cite[Definition 2.22]{JLT2021}, cf. \cite{MS1966})
\begin{equation}
\label{eq:suff}
T_*(h \mu) = \underline{\pb}^*(h)T_*(\mu) \text{, i.e., }  \underline{\pb} ^* (h) = \frac{d T_* (h \mu)}{d T_* (\mu)}\in  L^1(\Yy, T_*(\mu)).
\end{equation}

Examples  of   probabilistic morphisms      $T: \Xx \leadsto \Yy$  that are  sufficient  for     a statistical model   $\Pp_\Xx \subset \Xx$ are    1-1 measurable mappings  \cite[Example 20]{Le2020},
and   measurable  mappings  $\kappa: \Xx \to \Yy$    that  are ``regular" and  satisfying the
Fisher-Neymann  condition, see  \cite[Example 4]{JLT2021}, \cite[Example 19]{Le2020}  for more details.

\begin{proposition}\label{prop:mono}{\cite[Theorem 2]{Le2020}}
Let  $T: \Xx  \leadsto \Yy$ be a probabilistic  morphism  and $(\Pp_\Xx, \Dd_\Xx)$ an almost 2-integrable  $C^k$-diffeological statistical model.   Then for   any  $\mu \in \Pp_\Xx$  and  any $v \in T_\mu (\Pp_\Xx, \Dd_\Xx)$  we have the following monotonicity
	$$  \g_\mu  (v, v) \ge \g _{T_*\mu}  (T_*v, T_*v)$$
	with the equality    if  $T$ is sufficient  for   $\Pp_\Xx$.
\end{proposition}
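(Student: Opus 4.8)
The plan is to prove the monotonicity inequality by reducing everything to the classical statement about the Fisher metric on dominated measure families and the behaviour of logarithmic derivatives under Markov morphisms. First I would fix $\mu \in \Pp_\Xx$ and a tangent vector $v = \p_t c(0) \in C_\mu(\Pp_\Xx,\Dd_\Xx)$ represented by a $C^k$-curve $c:(-\eps,\eps)\to\Pp_\Xx$ with $c(0)=\mu$; the case of a general tangent vector follows by bilinearity since both sides of the asserted inequality are (restrictions of) quadratic forms, so it suffices to treat $w=v$ on the tangent cone. By Lemma \ref{lem:dominated}, $v \ll \mu$, so $\log v = dv/d\mu \in L^1(\Xx,\mu)$, and almost $2$-integrability gives $\log v \in L^2(\Xx,\mu)$. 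The curve $T_* \circ c$ is a $C^k$-curve in $\bigl(T_*(\Pp_\Xx),T_*(\Dd_\Xx)\bigr)$ by Proposition \ref{prop:preser}, and since $T_* = S_*(T)$ is a bounded linear map on $\Ss(\Xx)$, we have $T_*v = \p_t(T_* c)(0)$, so $T_*v$ is indeed a tangent vector at $T_*\mu$ and $\log(T_*v) = d(T_*v)/d(T_*\mu) \in L^2(\Yy,T_*\mu)$ by almost $2$-integrability of the pushforward model.

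The key step is the identity expressing the pushed-forward logarithmic derivative as a conditional expectation. Write $\ell := \log v \in L^2(\Xx,\mu)$. The claim is that
\begin{equation*}
\log(T_*v) = \E_{T_*\mu}\bigl[\,\cdot \mid \mathcal{B}\,\bigr](\ell) \quad\text{in } L^2(\Yy,T_*\mu),
\end{equation*}
more precisely that $d(T_*v)/d(T_*\mu)$ is the image of $\ell$ under the Markov operator $\mu$-adjoint to $T_*$, i.e. under the $L^2$-contraction induced by the Markov kernel. Concretely, for any bounded measurable $h$ on $\Yy$, using the exchange-of-derivative-and-integral identity (as in Remark \ref{rem:weak}, equation \eqref{eq:exchange}) and the definition \eqref{eq:markov1} of $S_*(T)$,
\begin{equation*}
\int_\Yy h \cdot \log(T_*v)\, d(T_*\mu) = \int_\Yy h\, d(T_*v) = \int_\Xx (\overline T^* h)\, dv = \int_\Xx (\overline T^* h)\cdot \ell\, d\mu,
\end{equation*}
where $\overline T^* h(x) := \int_\Yy h\, d\overline T(x)$. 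Since $x\mapsto \overline T(x)$ sends $\mu$ to $T_*\mu$, the map $h \mapsto \overline T^* h$ extends to an isometry-or-contraction $L^2(\Yy,T_*\mu)\to L^2(\Xx,\mu)$ whose adjoint is exactly the conditional-expectation-type operator $E: L^2(\Xx,\mu)\to L^2(\Yy,T_*\mu)$, and the displayed chain of equalities says precisely $\log(T_*v) = E(\ell)$. Because $E$ is an orthogonal-projection-composed-with-isometry (a contraction) on $L^2$, Jensen's inequality / the contraction property yields
\begin{equation*}
\g_{T_*\mu}(T_*v,T_*v) = \|E(\ell)\|^2_{L^2(\Yy,T_*\mu)} \le \|\ell\|^2_{L^2(\Xx,\mu)} = \g_\mu(v,v).
\end{equation*}

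For the equality case when $T$ is sufficient, I would use the existence of the recovery morphism $\underline{\pb}:\Yy\leadsto\Xx$ from \eqref{eq:suff}: applying $T_*(h\mu) = \underline{\pb}^*(h)\,T_*(\mu)$ with $h = \ell$ (approximating $\ell \in L^2 \subset L^1$ by bounded functions and passing to the limit, or applying the identity to a sequence $c$ whose logarithmic derivative is bounded) shows that $\underline{\pb}^*(\ell) = d T_*(\ell\mu)/d T_*\mu = d(T_*v)/d(T_*\mu) = \log(T_*v)$, and then $\overline T^*\bigl(\log(T_*v)\bigr) = \overline T^*\underline{\pb}^*(\ell)$; since $\underline{\pb}\circ T = \mathrm{id}$ as probabilistic morphisms on $\Pp_\Xx$, the composite $\overline T^* \circ \underline{\pb}^*$ acts as the identity on the relevant $L^2$-subspace, giving $\overline T^*(\log T_*v) = \ell$, so $\ell$ lies in the range of the isometry $\overline T^*$ and no norm is lost: $\|E(\ell)\| = \|\ell\|$. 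The main obstacle I anticipate is the measure-theoretic care needed to justify (i) the exchange identity \eqref{eq:exchange} when the test function is the unbounded $\ell\in L^2$ rather than a bounded function — this requires an approximation argument exploiting that $v$ is a finite signed measure dominated by $\mu$ and that $\ell\in L^2(\Xx,\mu)\subset L^1(\Xx,\mu)$ — and (ii) verifying that $h\mapsto\overline T^* h$ is well-defined and contractive from $L^2(\Yy,T_*\mu)$ to $L^2(\Xx,\mu)$, i.e. that $\overline T$ genuinely transports $\mu$ to $T_*\mu$ in the sense needed, which is where the definition of $S_*(T)$ and Fubini's theorem enter. Everything else is a routine translation of the known finite-dimensional/classical monotonicity of the Fisher information under Markov kernels into the diffeological language.
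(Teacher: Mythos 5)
The paper itself does not prove Proposition \ref{prop:mono} --- it is quoted from \cite[Theorem 2]{Le2020}, and the only in-paper discussion is the brief sketch in Remark \ref{rem:mono}. Your argument is precisely the standard proof underlying the cited theorem: the identity $\log(T_*v)=E(\log v)$, where $E\: L^2(\Xx,\mu)\to L^2(\Yy,T_*\mu)$ is the adjoint of the Markov contraction $h\mapsto\overline{T}^{\,*}h$, followed by the contraction estimate; this is correct in substance. Two points deserve attention. First, your opening reduction ``to $w=v$ on the tangent cone by bilinearity'' is neither valid as stated nor needed: nonnegativity of a difference of quadratic forms on a cone does not pass to its linear hull by bilinearity alone. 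Fortunately your actual mechanism is linear in $v$ --- the maps $v\mapsto\log v$ and $E$ are linear, and $T_*(\p_t c(0))=\p_t(T_*c)(0)$ because $S_*(T)$ is bounded linear --- so the inequality holds on all of $T_\mu(\Pp_\Xx,\Dd_\Xx)$ directly, and you should simply delete the reduction. For the same reason your worry about exchanging derivative and integral is moot in the $C^k$ case: the equality $\int_\Yy h\,d(T_*v)=\int_\Xx\overline{T}^{\,*}h\,dv$ is just \eqref{eq:markov1} applied to the signed measure $v$, no differentiation under the integral sign is involved. Second, for the equality case the route indicated in Remark \ref{rem:mono} is cleaner than yours: sufficiency yields a recovery morphism $\underline{\pb}$ with $\pb_*T_*=\mathrm{id}$ on $\Pp_\Xx$ (hence on tangent vectors, again by linearity and boundedness of $S_*(\underline{\pb})\circ S_*(T)$), and applying the already-proved monotonicity to $T$ and then to $\underline{\pb}$ sandwiches $\g_{T_*\mu}(T_*v,T_*v)$ between $\g_\mu(v,v)$ and itself. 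This bypasses both the approximation of the unbounded $\ell$ by bounded functions in \eqref{eq:suff} and your unproved assertion that $\overline{T}^{\,*}\circ\underline{\pb}^*$ acts as the identity on the relevant $L^2$-subspace.
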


\begin{remark}\label{rem:mono}  Proposition \ref{prop:mono} also holds  for   almost 2-integrable weakly  $C^k$-diffeological  statistical  models, since  the monotonicity  assertion follows from the  fact  that, given  a probabilistic morphism $T: \Xx \leadsto \Yy$,  the norm  of the   associated linear  bounded map $T_* : \Ss (\Xx) \to \Ss(\Yy)$ in  Remark \ref{rem:winv} is less  than or equal to 1. From  the monotonicity  assertion  we obtain the second assertion  concerning    sufficient  probabilistic  morphisms, since   if  $T: \Xx \leadsto \Yy$ is sufficient  w.r.t.  $\Pp_\Xx$  then  by \cite[Theorem 2.8.2]{JLT2021} there exists a  probabilistic  morphism
$\pb: \Yy \to \Xx$  such that $\pb_* (T_* (\Pp_\Xx))  = \Pp_\Xx$  and therefore $\pb_*(T_* (\Dd_\Xx)) = \Dd_\Xx$.
\end{remark}

The monotonicity (and  the invariance     under sufficient        probabilistic morphisms)  of   the    diffeological Fisher metric  suggests  that the    diffeological Fisher metric   can be    regarded as  information  metric  on  almost  2-integrable (weakly) $C^k$-diffeological   statistical models   cf. \cite{AJLS2015, AJLS2017, AJLS2018}, \cite{Le2017}.

\section{Diffeological    Cram\'er--Rao inequality}\label{sec:3}

For a locally convex  topological  vector  space  $V$  we    denote by  $Map(\Pp_\Xx, V)$  the space of all    mappings $\varphi:\Pp_\Xx\to V$  and by $V'$  the topological   dual of $V$.
Sometime we need   to  estimate  only  a ``coordinate"  $\varphi(\xi)$ of a   probability measure $\xi \in \Pp_\Xx$, which determines $\xi$  uniquely if  $\varphi$ is an embedding.

\begin{definition}
	\label{def:vares}{\cite[Definition 8]{Le2020}}  Let  $\Pp_\Xx$ be  a   statistical model  and $\varphi \in Map (\Pp_\Xx, V)$.
	{\it  A nonparametric $\varphi$-estimator $\hat \sigma_{\varphi}$}  is  a  composition  $\varphi\circ \hat \sigma:  \Xx \stackrel{\hat{\sigma}}{\to} \Pp_\Xx \stackrel{\varphi} {\to} V$. %\green{ A $\varphi$-estimator  will be called {\it   natural}, if $\varphi$ can be written  as   the  composition $\tilde \varphi\circ i$, where
	%$i: \Pp_\Xx \to \Ss(\Xx)$  is the   inclusion  and $\tilde  \varphi: \Ss(\Xx) \to  V$ is a $C^1$-map}.
\end{definition}

\begin{example}\label{ex:reg}   (1)    In supervised  learning  with  an  input   space $\Xx$   and  a  label  space $\Yy$  we are interested  in   the stochastic relation  between $x\in \Xx$ and its label $y \in \Yy$, which is  expressed    via a  measure
$\mu \in (\Pp (\Xx \times \Yy)$    that governs  the distribution of      labelled  pair $(x, y) \in \Xx \times \Yy$.   Finding $\mu$  is a   density   estimation  problem, assuming that we  are given
a sequence  of  i.i.d.     labelled   pairs  $\{(x_1, y_1), \cdots,  (x_n, y_n)\}$.   In practice, we  are  interested   only  in  knowing  the conditional  probability  $\mu_{\Yy|\Xx} (\cdot| x)$, which is regular
under   very   general assumptions  \cite{Faden1985}.  Then    finding   the  conditional  probability
$\mu_{\Yy|\Xx} (\cdot |x)$  is  equivalent   to finding   a   measurable mapping  $\overline T: \Xx  \to \Pp (\Yy)$,  or equivalently, a probabilistic morphism $T:\Xx \leadsto \Yy$. Usually  $\Yy$    is   represented  as  a subset  in $\R^n$   and  the knowledge of $\mu_{\Yy|\Xx} (\cdot |x)$  is often not required, it is sufficient  to determine  one of its characteristics,  for  example the  regression function
$$ r_\mu(x) = \int_\Yy  y d \mu_{\Yy|\Xx}(y|x).$$
In this case, the map $\varphi: \Pp (\Xx \times \Yy) \to  Map (\Xx, \R), \mu \mapsto r_\mu,$    is defined  as the composition of the  mappings defined  above
$$ \Pp (\Xx \times  \Yy) \to {\rm  Probm} (\Xx, \Yy) \to Map (\Xx, \R),$$
where   ${\rm Probm} (\Xx, \Yy)$  denotes  the space of   probabilistic morphisms  from $\Xx$ to $\Yy$.

(2) A classical example of a $\varphi$-map  is the   moment of   a   probability measure in a  1-dimensional   statistical model  $\pb (\Theta)$, where   $\Theta$ is an  interval or the real line. Given   a  real  function $g(x)$,   we define
$$\varphi  (\pb (\theta)): =   \int  g(x) d \pb  (\theta).$$
Under a certain condition   this map is  1-1 \cite[p. 55]{Borovkov1998}.
\end{example}

Now we shall    define an admissible  class  of  $\varphi$-estimators, introduced in \cite{Le2020}.
Let  $(\Pp_\Xx, \Dd_\Xx)$ be a $C^k$-diffeological  statistical model  and $V$  a   locally convex  vector  space.
For   $\varphi \in Map(\Pp_\Xx, V)$ and $l \in V'$ we    denote by $\varphi^l$ the composition  $l \circ \varphi$.  Then  we set
$$
L^2_{\varphi}(\Xx,\Pp_\Xx):=\bigl\{\hat\sigma\:\Xx\to P_\Xx\mid\varphi^l\circ\hat\sigma\in L^2_\xi(\Xx)\text{ for all }\xi\in P_\Xx\text{ and }l \in V'\bigr\}.
$$
For   $\hat \sigma \in  L^2_\varphi(\Xx, \Pp_\Xx)$  we define  the $\varphi$-mean value    of $\hat  \sigma$, denoted  by
$\varphi_{\hat \sigma}: \Pp_\Xx \to  V'' $,  as follows (cf. \cite[(5.54), p. 279]{AJLS2017})
$$\varphi_{\hat \sigma}   (\xi) (l): = \E_\xi (\varphi^l \circ \hat \sigma) \text{  for }   \xi \in \Pp_\Xx \text{ and } l \in V', $$
where $\E_\xi$   denoted  the mathematical expectation  w.r.t.  the  probability  measure $\xi \in \Pp (\Xx)$.
Let  us identify   $V$ with a  subspace   in $V^{''}$ via the canonical pairing.

The  difference  $b^{\varphi}_{\hat{\sigma}}: = \varphi_{\hat{\sigma}}- \varphi \in Map(\Pp_\Xx,  V ^{''})$   will be called {\it the bias }  of the $\varphi$-estimator $\varphi\circ \hat \sigma$.

%\end{definition}
For all $\xi \in \Pp_\Xx$ we   define  a    quadratic function $MSE^{\varphi}_\xi [\hat \sigma]$     on $V'$, which   is
called  the {\it mean  square error  quadratic   function}  at $\xi$,   by setting  for $l, h \in V'$ (cf. \cite[(5.56), p. 279]{AJLS2017})
\begin{equation}\label{eq:qd}
\MSE^\varphi_\xi[\hat\sigma](l,h):=
\E_\xi\Bigl[\bigl(\varphi^l\circ\hat\sigma-\varphi^l(\xi)\bigr)\cdot\bigl(\varphi^h\circ\hat\sigma -\varphi^h(\xi)\bigr)\Bigr].
\end{equation}
Similarly we define   the {\it variance  quadratic  function}  of   the $\varphi$-estimator $\varphi\circ \hat \sigma$  at $\xi \in \Pp_\Xx$
is     the quadratic  form $V^\varphi_\xi [\hat \sigma]$ on $V'$ such that for all   $l, h \in V'$ we have (cf.  \cite[(5.57), p.279]{AJLS2017})
$$V^\varphi_\xi [\hat \sigma](l, h) = \E_\xi [(\varphi^l\circ \hat \sigma-\E_\xi (\varphi^l\circ \hat \sigma))\cdot ( \varphi^h\circ \hat \sigma-\E_\xi(\varphi^h\circ \hat \sigma))].$$
Then it is known that (cf.\cite[(5.58), p. 279]{AJLS2017})
\begin{equation}\label{eq:vmse}
\MSE^\varphi_\xi [\hat \sigma] (l, h) = V^\varphi_\xi [\hat \sigma](l,h) +  b^\varphi_{\hat \sigma}(\xi)(l)  \cdot  b^\varphi_{\hat \sigma} (\xi) (h).
\end{equation}

Now  we assume that  $(\Pp_\Xx, \Dd_\Xx)$ is  an  almost 2-integrable
$C^k$-diffeological   statistical model.    For any $\xi \in \Pp_\Xx$
let  $ T_{\xi}^\g (P_\Xx,\Dd_\Xx)$  be the completion of  $T_\xi (P_\Xx,\Dd_\Xx)$ w.r.t.  the     Fisher  metric $\g$. Since  $T_\xi ^\g (\Pp_\Xx, \Dd_\Xx)$  is   a Hilbert  space,  the map
$$L_\g: T_\xi^\g (\Pp_\Xx,\Dd_\Xx) \to (T_\xi^\g (\Pp_\Xx,\Dd_\Xx))',\, L_\g (v)(w) := \la  v, w\ra_\g,  $$   is an isomorphism.
Then we define  the  inverse $\g ^{-1}_\xi$ of the Fisher  metric $\g_\xi$ on $ (T_\xi^\g (\Pp_\Xx, \Dd_\Xx))'$ as  follows
\begin{equation}
\label{eq:Finv}
\g_\xi ^{-1} (L_g  v, L_g  w) : = \g_\xi ( v, w)
\end{equation}

% In what follows we shall    find  a sufficient condition      for  the
% Gateaux-differentiability   of  function $\varphi_{\hat \sigma}$  on a    diffeological  statistical  model
% $(\Pp_\Xx, \Dd_\Xx)$
% such that   $d\varphi(\xi)$ extends  to an element  in $ (T_\xi^\g(\Pp_\Xx, \Dd_\Xx))'$.
\begin{definition}
	\label{def:reg}{\cite[Definition 9]{Le2020}, cf. \cite[Definition 5.18, p. 281]{AJLS2017}}
	Assume  that $\hat \sigma \in L^2_{\varphi}(\Xx, \Pp_\Xx)$.  We shall call  $\hat \sigma$  {\it a $\varphi$-regular estimator},
	if for all  $l \in V'$   the function  $\xi \mapsto \|\varphi^l \circ  \hat \sigma\|_{L^2 (\Xx, \xi)}$ is locally bounded, i.e.,
	for all  $\xi_0 \in \Pp_\Xx$
	$$\lim_{\xi \to \xi_0} \sup \|  \varphi^l\circ \hat \sigma  \|_{L^2 (\Xx, \xi)}  < \infty.$$
\end{definition}

For any $\xi \in \Pp_\Xx$  we
denote  by  $(\g ^\varphi_{\hat \sigma}) ^{-1}_\xi$  to be   the   following  quadratic form    on $V'$:
\begin{equation}\label{eq:g1}
(\g ^\varphi_{\hat \sigma})^{-1}_\xi(l, k) : = \g^{-1}_\xi ( d\varphi ^l_{\hat \sigma}, d\varphi^k_{\hat \sigma}) = \g_\xi ( \mathrm{grad}_\g  (\varphi^l_{\hat \sigma} ), \mathrm{grad}_\g  (\varphi^k_{\hat \sigma} )) .
\end{equation}
If  $\varphi: \Pp_\Xx \to V$ is  a local coordinate chart   around  a point $\xi\in \Pp_\Xx$  and  $\hat \sigma$ is $\varphi$-unbiased  then  $(\g^\varphi_{\hat \sigma})^{-1}_\xi$  is the   inverse  of the Fisher  metric   at $\xi$, see \cite[\S 5.2.3 (A), p. 286]{AJLS2017}.

In \cite{Le2020} L\^e proved the following diffeological  Cram\'er--Rao inequality
\begin{theorem}
\label{thm:cr} {\cite[Theorem 3]{Le2020}}  Let $(\Pp_\Xx, \Dd_\Xx)$   be a   2-integrable   $C^k$-diffeological statistical model, $\varphi$  a $V$-valued function on $\Pp_\Xx$ and  $\hat \sigma \in L^2_{\varphi} (\Xx, \Pp_\Xx)$  a $\varphi$-regular  estimator.  Then the
difference $\V_{\xi}^\varphi[\hat \sigma] -   (\hat \g^{\varphi}_{ \hat \sigma})^{-1}_\xi$     is   a  positive semi-definite  quadratic form on $V'$  for any $\xi \in \Pp_\Xx$.
\end{theorem}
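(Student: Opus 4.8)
The plan is to carry out, pointwise at each $\xi\in\Pp_\Xx$ and each covector $l\in V'$, the classical orthogonal-projection proof of the Cram\'er--Rao bound. The idea is to realize the centred estimator $\psi_l:=\varphi^l\circ\hat\sigma-\E_\xi(\varphi^l\circ\hat\sigma)$ as an element of $L^2(\Xx,\xi)$ --- which is legitimate because $\hat\sigma\in L^2_\varphi(\Xx,\Pp_\Xx)$, whence $\E_\xi(\psi_l)=0$ and $\|\psi_l\|_{L^2(\Xx,\xi)}^2=\V^\varphi_\xi[\hat\sigma](l,l)$ --- and then to identify its orthogonal projection onto the completed tangent space $T_\xi^\g(\Pp_\Xx,\Dd_\Xx)$ with the Fisher gradient $\grad_\g(\varphi^l_{\hat\sigma})$, so that the desired inequality becomes the statement that an orthogonal projection does not increase the $L^2$-norm.

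First I would prove the central identity: for every $v=\p_t c(0)\in C_\xi(\Pp_\Xx,\Dd_\Xx)$, represented by a $C^k$-curve $c$ with $c(0)=\xi$,
$$d(\varphi^l_{\hat\sigma})(v)=\frac{d}{dt}\Big|_{t=0}\E_{c(t)}(\varphi^l\circ\hat\sigma)=\int_\Xx(\varphi^l\circ\hat\sigma)\,dv=\la\psi_l,\log v\ra_{L^2(\Xx,\xi)},$$
using that $\log v=dv/d\xi\in L^2(\Xx,\xi)$ by Lemma \ref{lem:dominated} and \eqref{eq:logrep}, and that replacing $\varphi^l\circ\hat\sigma$ by its centring $\psi_l$ costs nothing because $\int_\Xx\log v\,d\xi=\frac{d}{dt}\big|_{t=0}c(t)(\Xx)=0$. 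The delicate point is the second equality, the interchange of $d/dt$ with $\int_\Xx$: since $\varphi^l\circ\hat\sigma$ need not be bounded, identity \eqref{eq:exchange} does not apply directly, so the plan is to truncate $\varphi^l\circ\hat\sigma$ at level $N$, apply \eqref{eq:exchange} to the bounded truncation, and let $N\to\infty$, controlling the tail uniformly for $t$ near $0$ by combining the $\varphi$-regularity of $\hat\sigma$ (Definition \ref{def:reg}) with the $2$-integrability of $(\Pp_\Xx,\Dd_\Xx)$. \emph{I expect this interchange to be the main obstacle}; it is the diffeological counterpart of the corresponding estimate in the Ay--Jost--L\^e--Schwachh\"ofer proof of the parametric Cram\'er--Rao inequality, and it is also what legitimizes, in the first place, the differentiability of $\varphi^l_{\hat\sigma}$ along $C^k$-curves.

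Granting the identity, Cauchy--Schwarz gives $|d(\varphi^l_{\hat\sigma})(v)|\le\|\psi_l\|_{L^2(\Xx,\xi)}\,|v|_\g$, so $d(\varphi^l_{\hat\sigma})$ extends to a bounded linear functional on the Hilbert space $T_\xi^\g(\Pp_\Xx,\Dd_\Xx)$, with Riesz dual $\grad_\g(\varphi^l_{\hat\sigma})$, and by \eqref{eq:Finv} and \eqref{eq:g1} one has $(\g^\varphi_{\hat\sigma})^{-1}_\xi(l,l)=\|\grad_\g(\varphi^l_{\hat\sigma})\|_\g^2$. Identifying $T_\xi^\g(\Pp_\Xx,\Dd_\Xx)$ isometrically with the closed subspace $\Ll_\xi:=\overline{\log(C_\xi(\Pp_\Xx,\Dd_\Xx))}\subset L^2(\Xx,\xi)$ via $v\mapsto\log v$ --- an isometry since $\g_\xi(v,v)=\|\log v\|_{L^2(\Xx,\xi)}^2$ --- the identity just proved reads $\la\psi_l,\log v\ra=\la\grad_\g(\varphi^l_{\hat\sigma}),\log v\ra$ for all $v$, i.e.\ $\grad_\g(\varphi^l_{\hat\sigma})$ is the orthogonal projection of $\psi_l$ onto $\Ll_\xi$. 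Hence
$$\V^\varphi_\xi[\hat\sigma](l,l)=\|\psi_l\|_{L^2(\Xx,\xi)}^2\ \ge\ \|\grad_\g(\varphi^l_{\hat\sigma})\|_\g^2=(\g^\varphi_{\hat\sigma})^{-1}_\xi(l,l).$$

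Finally, since $l\in V'$ is arbitrary and both $\V^\varphi_\xi[\hat\sigma]$ and $(\g^\varphi_{\hat\sigma})^{-1}_\xi$ are quadratic forms on $V'$, this inequality on the diagonal is exactly the statement that their difference is positive semi-definite; as $\xi\in\Pp_\Xx$ was arbitrary, this completes the proof. I would also record that $2$-integrability (rather than merely almost $2$-integrability) is what is used to guarantee the continuity needed when passing from curves to general $C^k$-plots and to make sense of $\grad_\g$ in the metric completion.
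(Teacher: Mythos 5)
Your proposal is correct and follows essentially the same route as the proof the paper cites from \cite[Theorem 3]{Le2020}: the differentiation-under-the-integral identity that you isolate as the main obstacle is precisely the content of \cite[Proposition 2]{Le2020} (cf. \cite[Lemma 5.2, p.~282]{AJLS2017}), which Remark \ref{rem:cr} identifies as the key ingredient and which is handled there by total-variation estimates under the local boundedness of $\xi\mapsto\|\varphi^l\circ\hat\sigma\|_{L^2(\Xx,\xi)}$ from Definition \ref{def:reg}, exactly the hypothesis you invoke. The rest of your argument --- the identification $d(\varphi^l_{\hat\sigma})(v)=\la\psi_l,\log v\ra_{L^2(\Xx,\xi)}$ followed by the orthogonal-projection/Cauchy--Schwarz step in $L^2(\Xx,\xi)$ and polarization --- is the same as in the cited proof.
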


\begin{remark}\label{rem:cr}  The  proof   of   Theorem \ref{thm:cr}  does not   extend    to the case of  2-integrable weakly $C^k$-diffeological  statistical models $(\Pp_\Xx, \Dd_\Xx)$.  The  main  problem is  the validity  of the  differentiation  under  integral sign  for  a $C^k$-curve  $c: (0,1) \to (\Pp_\Xx, \Dd_\Xx),\:  t \mapsto \mu_t,$
\begin{equation}\label{eq:diff}
{d\over  dt}\int _\Xx l \circ \varphi \circ \hat \sigma\, d \mu_t = \int_\Xx l \circ \varphi \circ \hat \sigma\,d\mu_t'
\end{equation}
where  $\mu_t' = \p ^w _t c (t)$.  This identity  is valid   if  $i \circ c: (0,1) \to \Ss(\Xx)$ is a   $C^1$-map   and if the function  $\xi \to \| \varphi^l \circ \hat \sigma\|_{L^2 (\Xx, \xi)}$ is  locally bounded, see \cite[Lemma 5.2, p.  282]{AJLS2017}, whose proof involves estimations  using      the  total  variation norm. This  local boundedness condition has been stated in Definition \ref{def:reg} and  Theorem \ref{thm:cr}.   The identity (\ref{eq:diff})  has been used  in the proof of   \cite[Proposition 2]{Le2020},  which is an important  ingredient  of the   proof  of the    diffeological  Cram\'er--Rao inequality \cite[Theorem 3]{Le2020}. If  instead  of the  condition that  the function  $\xi \to \| \varphi^l \circ \hat \sigma\|_{L^2 (\Xx, \xi)}$ is  locally bounded,   we   assume  a stronger condition that
$l\circ \varphi \circ \hat \sigma: \Xx \to \R$ is a bounded   function for all $l \in V'$,  by Remark \ref{rem:weak}   the   identity  (\ref{eq:diff}) holds.   Under this  stronger assumption,  the Cr\'amer-Rao equality     holds  for  2-integrable weakly $C^k$-diffeological  statistical models $(\Pp_\Xx, \Dd_\Xx)$, since     other  arguments used in the   proof  of  the diffeological  Cr\'amer-Rao inequality  \cite[Theorem 3]{Le2020}   also hold for     this general case. We conjecture  that Theorem \ref{thm:cr} is also valid  for  weakly $C^k$-diffeological  statistical  models, since  any  weakly $C^1$-map $[0,1] \to \Ss(\Xx)_{TV}$  is a.e.   differentiable  by \cite[Theorem 3.2]{Kaliaj2016}.
\end{remark}

\section{Diffeological Hausdorff--Jeffrey   measure}\label{sec:hausdorfj}

In the  previous sections   we demonstrated   that  the    diffeological  Fisher    metric    is  a  natural  extension of  the Fisher    metric,  and   the diffeological Fisher  metric  plays the same  role   in  frequentist  nonparametric estimation as the Fisher   metric  in    frequentist   parametric estimation.     In this  section   we  shall    introduce  the concept  of the Hausdorff--Jeffrey measure,   using the     diffeological Fisher metric  and the  concept  of the Hausdorff measure, which plays a fundamental role in geometric  measure  theory \cite{Federer1969}.
%Our exposition of the concept  of the  Hausdorff measure  follows \cite{AT2004} and \cite{Morgan2009}.
%Our main references  are  \cite{AT2004}, \cite{Morgan2009}, \cite{Federer1969}

Let us   first  recall  the concept   of   the Hausdorff measure on   a  metric  space  $(E, d)$, following \cite{Federer1969}, \cite{AT2004}, \cite{Morgan2009}.
Recall   that for any  subset  $S \subset E$   {\it  the diameter}  of  $S$  is
$$ \diam \,  (S)  = \sup \{ d (x, y)|\, x, y \in S\}.$$
For  $k \in \N$    let   $\alpha_k$ denote the   Lebesgue   measure  of the   closed  unit  ball
$B^k(0,1)$ of radius  1 and centered at  $0$      in $\R^k$.
Let $A\subset  E$. For small  $\delta$  cover $A$ efficiently  by countably many sets $A_j$ with  $\diam \, (A_j) \le \delta$,  and  {\it the $k$-dimensional  Hausdorff  measure  of $A$}  is defined as follows
\begin{equation}\label{eq:hd}
\Hh^k(A):=\lim_{\delta\to0}\alpha_k\inf\biggl\{\sum_{j\in I}\bigg(\frac{\diam\,(A_j)}{2}\bigg)^k\mid\diam\,(A_j)
\le\delta\,\&\,A\subset\bigcup_{j\in I}A_i\biggr\}.
\end{equation}
It is known that  $\Hh^k$ is a  regular Borel measure \cite[p. 171]{Federer1969}, see also \cite[Theorem  2.1.4, p.21]{AT2004}. Furthermore, $\Hh^0$ is the  counting  measure.

The  definition  of the $k$-dimensional Hausdorff  measure extends  to any nonnegative  real dimension $k$, by  extending  the  definition of  $\alpha_k$  with  the following  definition
$$ \alpha_k : = \frac{\pi ^{k/2}}{\Gamma  (1 + k/2)} \text { where }  \Gamma (t) : = \int _0 ^\infty  x^{t-1} e^{-x}\, dx.$$

The {\it  Hausdorff dimension}  $\Hh\mhyp\dim (A)$ of a  nonempty  set $A\subset (E,d)$  is defined as
\begin{center}
$\Hh\mhyp\dim (A) : =\inf \{ m \ge 0 |\,  \Hh^m (A) = 0\}$.
\end{center}
It is known that if $k > \Hh\mhyp\dim  (A)$  then $\Hh ^k (B) = 0$ and
 if  $k < \Hh\mhyp\dim (A)$  then $\Hh^k (A)  = \infty$.

The Hausdorff measure  enjoys the following  natural  properties.
\begin{proposition}\label{prop:ambr1}

{\rm(1)}  Let $(M^m, g)$  be  a   Riemannian    manifold, regarded  as a metric  space  with  the Riemannian  distance $d_g$. Then  the  Hausdorff  measure  $\Hh^m$ on $M^m$  coincides  with the standard volume.

{\rm(2)}  Let $\varphi: A \subset (M^k, g) \to (N^n, g')$   be a Lipschitz  map  from an open domain    $A$ in a Riemannian manifold  $(M^k, g)$  of dimension $k$ to a Riemannian manifold   $(N^n, g')$  of dimension $n$ and   $n \ge k$.  By Rademacher's  theorem  $d\varphi$ and  its {\it area   factor }
$$
\mathbf{J}\,d\varphi:=\sqrt{\det\bigl((d\varphi)^*\circ(d\varphi)\bigr)}
$$
 are  defined $\Hh^k$-almost everywhere   on $A$. If  $k  =n$ then  we have  the following {\it area  formula}
\begin{equation*}%\label{eq:area}
\Hh^n (\varphi(A)) = \int_A {\bf J} d\varphi d\Hh^n (x).
\end{equation*}
\end{proposition}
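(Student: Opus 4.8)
The plan is to reduce both assertions to their classical Euclidean counterparts --- the identity $\Hh^m=\Ll^m$ on $(\R^m,|\cdot|)$ and the Euclidean area formula, see \cite[2.10.35, 3.2.3]{Federer1969} --- by localising in coordinate charts and using the bi-Lipschitz invariance of Hausdorff measure. Since $\Hh^k$ is a regular Borel measure, an identity of measures or an ``a.e.'' statement may be verified on the domains of countably many charts covering the manifold. For (1): in any chart $\psi\colon U\to\R^m$ the Riemannian distance $d_g$ and the Euclidean distance of $\R^m$ are mutually Lipschitz on relatively compact subsets, so $\Hh^m$ and $\Ll^m$ (pulled back via $\psi$) are mutually absolutely continuous, say $\Hh^m=\rho\cdot(\psi^{-1})_*\Ll^m$; choosing $\psi$ to be normal coordinates centred at a point $p$, where $d_g$ agrees with the Euclidean distance to first order and $\sqrt{\det g}(p)=1$, a density computation at $p$ (using $\Hh^m=\Ll^m$ on $\R^m$) gives $\rho(p)=1=\sqrt{\det g}(p)$. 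As $p$ is arbitrary, $\rho=\sqrt{\det g}$, i.e.\ $\Hh^m$ is the Riemannian volume.

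For (2): read in any pair of charts, $\varphi$ is a Lipschitz map between open subsets of $\R^k$ and $\R^n$, hence differentiable $\Ll^k$-a.e.\ by Rademacher's theorem; by part (1) (applied to $M^k$) this is the same as $\Hh^k$-a.e., and since $(d\varphi)^*\circ d\varphi$ is self-adjoint and non-negative, $\mathbf{J}\,d\varphi=\sqrt{\det((d\varphi)^*\circ d\varphi)}$ is a well-defined $\Hh^k$-measurable function on $A$. For the area formula, cover $A$ by countably many chart domains $U_\alpha$ with $\varphi(A\cap U_\alpha)$ inside a chart $V_\alpha$ of $N^n$, and write $\varphi$ in coordinates there as $\bar\varphi\colon\widetilde A_\alpha\subset\R^k\to\R^n$. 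Apply the Euclidean area formula to $\bar\varphi$; a pointwise computation with Gram matrices gives $\mathbf{J}\,d\varphi=\mathbf{J}\,d\bar\varphi\cdot\sqrt{\det g'}(\varphi)/\sqrt{\det g}$, while by part (1) the chart expressions of $\Hh^k$ on $M^k$ and of $\Hh^n$ on $N^n$ carry the extra factors $\sqrt{\det g}$ and $\sqrt{\det g'}$; when $k=n$ these distortion factors cancel and the Euclidean identity turns into $\Hh^n(\varphi(A\cap U_\alpha))=\int_{A\cap U_\alpha}\mathbf{J}\,d\varphi\,d\Hh^n$. Summing over $\alpha$ (arranging the chart pieces to be disjoint, or using a partition of unity) gives the assertion; in the non-injective case the left-hand side is to be read as $\int_{N}\#\bigl(\varphi^{-1}(y)\cap A\bigr)\,d\Hh^n(y)$, which coincides with $\Hh^n(\varphi(A))$ exactly when $\varphi|_A$ is injective, e.g.\ for a coordinate chart.

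I expect the main obstacle to be the change-of-variables bookkeeping in (2): one must establish the pointwise linear-algebra identity $\det\bigl((d\bar\varphi)^\top G'\,d\bar\varphi\bigr)=\det G\cdot\det\bigl((d\varphi)^*\circ d\varphi\bigr)$ relating the Euclidean Jacobian of $\bar\varphi$, the Gram matrices $G$, $G'$ of the two chart metrics, and the intrinsic area factor, then verify it $\Hh^k$-a.e.\ while keeping track of the countably many null sets coming from Rademacher's theorem and the successive chart changes; the measurability of the multiplicity function $y\mapsto\#\bigl(\varphi^{-1}(y)\cap A\bigr)$ is part of the same Federer machinery that must be cited. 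Everything else is a citation (Euclidean area formula, Rademacher, $\Hh^m=\Ll^m$ on $\R^m$) or the routine check that density comparisons pass through bi-Lipschitz maps.
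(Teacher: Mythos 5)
Your proposal is correct in outline and follows essentially the same route as the paper, which in fact gives no proof of its own but simply cites \cite[p.~29--30]{AT2004} for (1) and \cite[p.~44--45]{AT2004} for (2); those references carry out exactly the chart-localization, bi-Lipschitz comparison of $d_g$ with the Euclidean distance, density computation in normal coordinates, and reduction to the Euclidean area formula via Rademacher's theorem that you describe. Your remark that the left-hand side of the area formula must be read as $\int_{N}\#\bigl(\varphi^{-1}(y)\cap A\bigr)\,d\Hh^{n}(y)$ unless $\varphi|_{A}$ is injective correctly flags a minor imprecision in the statement as printed, and is the only point where your write-up goes beyond a citation.
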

For  a  proof of  Proposition \ref{prop:ambr1}(1) see  \cite[p. 29,30]{AT2004}.  For a  proof  of  Proposition \ref{prop:ambr1}(2)  see   \cite[p. 44, 45]{AT2004}.

\begin{definition}\label{def:hausdorfj}  Let $(\Pp_\Xx, \Dd_\Xx)$ be a    2-integrable  $C^k$-diffeological  statistical model   with  the diffeological  Fisher   distance  $d_\g$  and $m\in \R$  the Hausdorff dimension   of $(\Pp_\Xx, d_\g)$. Then  the  Hausdorff  measure $\Hh^m_\g$ on  $(\Pp_\Xx, d_\g)$
will  be called  the  {\it diffeological Hausdorff--Jeffrey measure of $(\Pp_\Xx, \Dd_\Xx)$}.
\end{definition}

Now   we shall relate  the  diffeological Hausdorff--Jeffrey  measure  $\Hh^m_\g$  with  the  unnormalized    Jeffrey   prior  measure  $\Jj ^m_\g$  defined   on a     2-integrable  parameterized  statistical  model  $(M^m, \Xx, \pb)$, where  $\pb: M^m \to \Ss (\Xx)$  is an injective   $C^1$-map \cite{Jeffrey1946}.  Recall that $J_\g$ is   equal  to the Riemannian
volume  of the (possibly degenerate)  Riemannian manifold  $(M^k, \g)$,  whose density is   zero  at the    points  of $M$ where  the Fisher  metric $\g$ is degenerate.
\begin{theorem}\label{thm:jeffrey}
{\rm(1)} Let $(M^m,\Xx,\pb)$ be a $2$-integrable parameterized statistical model, where $M^m$ is a smooth manifold of dimension $m$ and $\pb\:M^m\to\Ss(\Xx)$ is an injective $C^1$-map. We regard $\pb$ as a $C^1$-map from $M^m$ to the $2$-integrable $C^1$-diffeological statistical model $\bigl(\pb(M),\pb_*(\Dd_{can})\bigr)$. Then
$$
\pb_*(\Jj^m_\g)=\Hh^m_\g.
$$
{\rm(2)} Let $T\:\Xx\leadsto\Yy$ be a probability morphism and $(\Pp_\Xx,\Dd_\Xx)$ a $2$-integrable $C^k$-diffeological statistical model. Then for any $k\in\R$ and any Borel set in $(\Pp_\Xx,d_\g)$ we have
\begin{equation}\label{eq:hausdorfj}
\Hh^k_\g (A)\ge\Hh^k_\g\bigl(\pb_* (A)\bigr).
\end{equation}
The inequality in $(\ref{eq:hausdorfj})$ turns to inequality if $T$ is sufficient w.r.t. $\Pp_\Xx$.
\end{theorem}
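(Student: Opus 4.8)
The plan is to prove the two parts by different means: part~(1) is reduced to the classical area formula applied to the square-root embedding of the model, and part~(2) to the monotonicity of the Fisher metric under probabilistic morphisms together with the behaviour of Hausdorff measure under $1$-Lipschitz maps. For part~(1) I would first describe $(\pb(M),d_\g)$ explicitly. Since $\pb_*(\Dd_{can})$ is the pushforward of the manifold diffeology of $M$, every $C^1$-curve in $\bigl(\pb(M),\pb_*(\Dd_{can})\bigr)$ is locally of the form $\pb\circ q$ with $q$ a $C^1$-curve in $M$, or locally constant, and since $\pb$ is injective such a $q$ is unique. Hence, by Definition~\ref{def:wtangent}, the tangent cone at $\pb(u)$ is $d\pb_u(T_uM)$ and the diffeological Fisher metric on $\pb(M)$ pulls back under $\pb$ to the (continuous, possibly degenerate) quadratic form $\g_M:=\pb^{*}\g$ on $M$; its continuity follows from $2$-integrability, applied to the plot $\pb$, together with polarization. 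Consequently the length of $\pb\circ q$ equals $\int\sqrt{\g_M(\dot q,\dot q)}\,dt$, so $\pb$ is an isometric bijection of $(M,d_{\g_M})$ onto $(\pb(M),d_\g)$, where $d_{\g_M}$ is the length distance of $\g_M$; in particular $\pb_{*}\Hh^m_{d_{\g_M}}=\Hh^m_\g$ and it remains to identify $\Hh^m_{d_{\g_M}}$ with $\Jj^m_\g$.

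This last identity is local, so I would fix a coordinate chart $U$ of $M$ and a measure $\mu_0\in\Pp(\Xx)$ dominating $\pb(U)$ (cf.\ Lemma~\ref{lem:cdominated} and \cite[\S3.2]{AJLS2017}). Writing $p_u:=d\pb(u)/d\mu_0$, the map $\iota\:U\to L^2(\Xx,\mu_0)$, $u\mapsto 2\sqrt{p_u}$, is injective, is a $C^1$-map by the $2$-integrability hypothesis and the Ay--Jost--L\^e--Schwachh\"ofer results invoked in Example~\ref{ex:friedrich}, and satisfies $\|d\iota_u(v)\|^2_{L^2(\mu_0)}=\g_M(v,v)$, i.e.\ $\iota^{*}\la\cdot,\cdot\ra_{L^2(\mu_0)}=\g_M$. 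The area formula (Proposition~\ref{prop:ambr1}(2), in the version for $C^1$-maps from an open subset of $\R^m$ into a separable Hilbert space, cf.\ \cite[3.2.3]{Federer1969}) then gives, for every Borel $A\subset U$,
$$\Hh^m\bigl(\iota(A)\bigr)=\int_A\sqrt{\det\g_M}\,du=\Jj^m_\g(A),$$
and on the locus where $\g_M$ is degenerate both sides vanish, since there $\sqrt{\det\g_M}=0$. Finally, on the $m$-rectifiable set $\iota(U)$ the ambient (chordal) $m$-dimensional Hausdorff measure coincides with the one defined by the intrinsic length distance of $\iota(U)$, and that intrinsic length distance is exactly $d_{\g_M}$ transported through $\iota$; hence $\Hh^m_{d_{\g_M}}(A)=\Jj^m_\g(A)$ for Borel $A\subset U$. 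Summing over a countable atlas of $M$, using that $\Jj^m_\g$ and $\Hh^m_\g$ are Borel measures and that $\pb$ is injective, gives $\pb_*(\Jj^m_\g)=\Hh^m_\g$.

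For part~(2), Proposition~\ref{prop:preser}(3) and Remark~\ref{rem:winv} show that $\bigl(T_*(\Pp_\Xx),T_*(\Dd_\Xx)\bigr)$ is again $2$-integrable, so its diffeological Fisher distance is defined. If $c$ is a $C^k$-curve in $(\Pp_\Xx,\Dd_\Xx)$ then $T_*\circ c$ is a $C^k$-curve in $T_*(\Pp_\Xx)$ with $\p_t(T_*\circ c)=T_*(\dot c)$, and monotonicity of the Fisher metric (Proposition~\ref{prop:mono}, Remark~\ref{rem:mono}) gives $|\p_t(T_*\circ c)(t)|_\g\le|\dot c(t)|_\g$, hence $L(T_*\circ c)\le L(c)$; taking the infimum over piece-wise $C^k$-curves joining $x$ to $y$ shows that $T_*\:(\Pp_\Xx,d_\g)\to(T_*(\Pp_\Xx),d_\g)$ is $1$-Lipschitz. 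A $1$-Lipschitz map never increases Hausdorff measure, since it carries a cover of $A$ by sets of diameter $\le\delta$ to a cover of $T_*(A)$ by sets of diameter $\le\delta$; this gives $\Hh^k_\g(A)\ge\Hh^k_\g\bigl(T_*(A)\bigr)$ for every real $k$ and every Borel $A$. If $T$ is sufficient for $\Pp_\Xx$, then by \eqref{eq:suff} (see also \cite[Theorem~2.8.2]{JLT2021}) there is a probabilistic morphism $\pb\:\Yy\leadsto\Xx$ with $\pb_*\circ T_*=\mathrm{id}$ on $\Pp_\Xx$ and $\pb_*(T_*(\Dd_\Xx))=\Dd_\Xx$; the same argument makes $\pb_*$ $1$-Lipschitz, so $\Hh^k_\g(A)=\Hh^k_\g\bigl(\pb_*(T_*(A))\bigr)\le\Hh^k_\g\bigl(T_*(A)\bigr)\le\Hh^k_\g(A)$, forcing equality.

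The step I expect to be the main obstacle is the local analysis in part~(1): making the area formula rigorous when the square-root embedding lands in the infinite-dimensional space $L^2(\Xx,\mu_0)$ and the measure of interest is built from the diffeological Fisher distance rather than from the total-variation distance. This requires (a) the $C^1$-regularity of $\iota$, which is where $2$-integrability is genuinely used; (b) a Hilbert-space-valued version of the area formula of Proposition~\ref{prop:ambr1}(2); and (c) the comparison, valid on $m$-rectifiable images, between the ambient and the intrinsic-length $m$-dimensional Hausdorff measures. Once (a)--(c) are available the degenerate locus of $\g_M$ causes no trouble, as the Jacobian $\sqrt{\det\g_M}$ vanishes there and both measures assign it zero.
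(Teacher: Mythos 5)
Your argument follows the same route as the paper, which proves part (1) by appealing to the area formula of Proposition \ref{prop:ambr1} and part (2) by appealing to the monotonicity of Proposition \ref{prop:mono}; the paper's proof consists of exactly those two citations, so your write-up supplies considerably more detail than the original. The technical points you flag in part (1) --- the $C^1$-regularity of the square-root embedding into $L^2(\Xx,\mu_0)$, the Hilbert-space-valued area formula, and the comparison of ambient and intrinsic-length Hausdorff measure across the degenerate locus of $\g_M$ --- are genuine steps that the paper's one-line proof silently skips, and your treatment of part (2) via the $1$-Lipschitz property of $T_*$ (and of its left inverse in the sufficient case) is complete and correct.
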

\begin{proof}
The  first assertion  of  Theorem   \ref{thm:jeffrey}  follows    from Proposition \ref{prop:ambr1}.  The  second  assertion   is a   consequence of  Proposition \ref{prop:mono}.
\end{proof}
According to Jordan \cite{Jordan2011},  to  justify   the choice  of  an a prior  probability measure
 is  one  of  main    theoretical   problems   in Bayesian  statistics.   Theorem
\ref{thm:jeffrey} justifies  the choice  of the Hausdorff--Jeffrey  measure  as natural objective a prior  measure on 2-integrable   $C^k$-diffeological statistical models.

\section{Conclusion  and  outlook}
In this  paper  we showed  that the concept  of the diffeological  Fisher  metric is   a natural  extension of the  notion of the Fisher metric,  originally defined  on parameterized  statistical models.    There are two advantages
of the  nonparametric  diffeological   Fisher metric:  (1) it can be defined  on singular  statistical  models, (2)  it    turns     a   2-integrable   $C^k$-diffeological  statistical  model  to a length space, on which  the  Hausdorff measure  is  a   natural  generalization  of the Jeffrey measure.   We also discussed  some  open  questions concerning   extending results from   $C^k$-diffeological   statistical   models   to  weakly $C^k$-diffeological statistical models.  To make  more use  of  the diffeological   Fisher metric  we  expect   that we need  to  put  certain assumptions on the  singularities  of
underlying  2-integrable  $C^k$-diffeological statistical models.  In the case  a  $C^k$-diffeological  statistical model  does not admit a  diffeological  Fisher metric, we might  consider  instead  diffeological   Finsler metric as in \cite{Amari1984}. In view of  recent    developments  of   Barbaresco's and  Gay-Balmaz'  geometric theory of  Gibbs  probability  densities  with promising applications in machine  learning \cite{BG2020},  we  plan to  develop  a  theory of  diffeological   exponential  models for  a  diffeological treatment of  infinite dimensional  families  of Gibbs   probability  densities.

\section*{Acknowledgement} HVL  would like to thank Professor  Fr\'ed\'eric Barbaresco  and Professor Frank Nielsen    for   their invitation to the
Les Houches conference  in July 2020 where   a  part of the  results  in this  paper  has been  reported.  She  is     grateful  to  Professor  Sun-ichi Amari  who  discussed  with her   the phenomena of
degeneration  and explosion of the Fisher metric and   sent  her  a  copy of   his paper \cite{Amari1984} several years ago. The  authors would like to thank  the referee for    helpful comments  which  considerably improve  the  exposition of the  present paper.


\begin{thebibliography}{99999}
	
\bibitem[AJLS2015]{AJLS2015} Ay N., Jost J., L\^e H.V., Schwachh\"ofer  L.,
		Information geometry and sufficient statistics, {\em Probability Theory and Related Fields} {\bf 2015}, 162, 327--364.
\bibitem[AJLS2017]{AJLS2017} Ay N., Jost J., L\^e H.V., Schwachh\"ofer  L., {\em Information geometry},  Springer  Nature: Cham,  Switzerland, 2017.	 
\bibitem[AJLS2018]{AJLS2018} Ay N., Jost J., L\^e H.V., Schwachh\"ofer  L., Parametrized measure models,     {\em Bernoulli} {\bf 2018},  24, 1692--1725.
%\bibitem[AK2000]{AK2000} Ambrosio L.,  Kirchheim B., Rectifiable sets in metric and Banach spaces,  {\em Math. Ann.} {\bf 2000}, 318, 527--555.
%\bibitem[Amari1985]{Amari1985} S. Amari S., {\em Differential-Geometric Methods in Statistics}, Lecture Notes in Statistics 28, Springer-Verlag: Heidelberg, Germany,  1985.
\bibitem[Amari1984]{Amari1984} Amari S., The Finsler geometry  of  families nonregular   distributions,  {\bf 1984}, translated  into English by S. V. Sabau in 2002.
\bibitem[Amari2016]{Amari2016} Amari S.,  {\em Information Geometry and Its Applications}, Applied Mathematical Sciences, vol. 194, Springer: Berlin, Germany, 2016.
\bibitem[AT2004]{AT2004} Ambrisio L.,  Tilli P., {\em  Topics in Analysis  on Metric Spaces}, Oxford University Press, 2004.
\bibitem[BG2020]{BG2020} Barbaresco F., and Gay-Balmaz F., Lie Group Cohomology and (Multi)Symplectic Integrators: New Geometric Tools for Lie Group Machine Learning Based on Souriau Geometric Statistical Mechanics, {\em Entropy} {\bf 2020}, 22, 498; doi:10.3390/e22050498.
%\bibitem[Billingsley1966]{Billingsley1966} Billingsley B., {\em Weak Convergence of Probability Measures}, J.Wiley and Sons, New York, 1966.
\bibitem[Bogachev2010]{Bogachev2010}{\sc Bogachev V. I.},  {\em Differentiable Measures and the Malliavin Calculus}, AMS, Providence, Rhode  Island, 2010.
\bibitem[Bogachev2018]{Bogachev2018} Bogachev V.I., {\em  Weak convergence  of measures}, Mathematical Surveys and Monographs, vol. 234, Amer. Math. Soc.:  Providence, RI, USA, 2018.
\bibitem[Borovkov1998]{Borovkov1998}  Borovkov  A.A., {\em Mathematical statistics}, Gordon and Breach Science Publishers: Amsterdam, The Nethelands, 1998.
\bibitem[Faden1985]{Faden1985}  Faden A.M.,   The existence of regular conditional probabilities: necessary and sufficient conditions. {\em Ann. Probab.} {\bf 1985},13,  288--298.
\bibitem[Federer1969]{Federer1969}{ Federer H.}, {\em Geometric Measure Theory}, Die Grundlehren der mathematischen Wissenschaften, vol. 153, Springer-Verlag, New York, 1969.
\bibitem[Friedrich1991]{Friedrich1991} Friedrich T.,  Die Fisher-Information  und symplektische   Strukturen, {\em Math. Nachr.}  {\bf 1991},  153, 273--296.
\bibitem[Giry1982]{Giry1982}{\sc M. Giry},  A categorical approach to probability theory, In: B. Banaschewski, editor, {\em Categorical Aspects of Topology and Analysis}, Lecture Notes in Mathematics  {\bf1982} vol.   915, pp. 68--85, Springer: Berlin- Heidelberg, Germany.
\bibitem[Chentsov1972]{Chentsov1972} Chentsov N., {\em Statistical decision rules and optimal inference}, Nauka: Moscow,  Russia,   1972,   English translation  in:  Translation of Math. Monograph vol. 53, Amer. Math. Soc.:  Providence, RI, USA, 1982.
%\bibitem[HM1982]{HM1982} Heinrich  S., Mankiewicz P.,  Applications of ultrapowers to the uniform and Lipschitz classification of Banach spaces. {\em Stud. Math.} {\bf 1982}, 73, 225--251.
%\bibitem[IH1981]{IH1981} Ibragimov I.A.  and  Has'minskii R Z., {\em Statistical Estimation: Asymptotic Theory},  Springer-Verlag: New-York, USA,  1981.
\bibitem[IZ2013]{IZ2013} Iglesias-Zemmour P., {\em Diffeology}, Amer. Math. Soc.:  Providence, RI, USA, 2013.
%\bibitem[Jaynes1968]{Jaynes1968} Jaynes E. T. Prior Probabilities, {\em IEEE Trans. on Systems Science and Cybernetics}, {\bf 1968}, 4,  227-241.
\bibitem[Jeffrey1946]{Jeffrey1946}{ Jeffrey H.}  An Invariant Form for the Prior Probability in Estimation Problems, {\em Proceedings of the Royal Society of London. Series A, Mathematical and Physical Sciences}, {\bf 1946},  186, 453--461.
\bibitem[JLT2021]{JLT2021} Jost  J., L\^e  H. V.,  and Tran T. D.,  Probabilistic morphisms and   Bayesian nonparametrics  {\bf 2021}  arXiv:1905.11448v3.
\bibitem[Jordan2011]{Jordan2011} Jordan, M. I.,  What are the open problems in Bayesian statistics? {\em The ISBA Bulletin} {\bf 2011}, 18,  1--4.
\bibitem[Kaliaj2016]{Kaliaj2016} Kaliaj  S. B., Differentiability and Weak Differentiability. {\em Mediterr. J. Math.} {\bf 2016}, 13, 2801-2811. https://doi.org/10.1007/s00009-015-0656-6
\bibitem[Lawvere1962]{Lawvere1962} Lawvere W. F.,   The category of probabilistic mappings, {\bf 1962}.{\em Unpublished,	Available at } https://ncatlab.org/nlab/files/lawvereprobability1962.pdf.	
\bibitem[Le2017]{Le2017} L\^e H. V., The uniqueness of the Fisher metric as information metric, {\em Annals of Institute of Statistical Mathematics}  {\bf 2017}, 69, 879-896,    arXiv:math/1306.1465.
\bibitem[Le2020]{Le2020} L\^e  H. V., Diffeological Statistical Models, the Fisher Metric and Probabilistic Mappings, {\em Mathematics} {\bf 2020}, 8(2), 167; \url{https://doi.org/10.3390/math8020167}.
\bibitem[Morgan2009]{Morgan2009} Morgan F., {\em Geometric Measure Theory A Beginner’s Guide,
Fourth Edition}, Elsevier, 2009.
\bibitem[McCullagh2002]{McCullagh2002} McCullagh P., What is a statistical model, {\em The Annals of Statistics} {\bf 2002},30,  1225--1310.
\bibitem[MS1966]{MS1966} Morse N. and   Sacksteder R., Statistical isomorphism, {\em Annals of Math. Statistics},  {\bf 1966}, {\em 37}, 203--214.
%\bibitem[MFSS2017]{MFSS2017}  Muandet K. ,  Fukumizu K.,  Sriperumbudur B. and	 Sch\"olkopf B.,  Kernel Mean Embedding of Distributions: A Review and Beyonds, {\em Foundations and Trends in Machine Learning}, {\bf 2017}, {\em  10},: No. 1-2, pp 1--141.
\bibitem[Neveu1970]{Neveu1970} Neveu J., {\em Bases Math\'ematiques du Calcul  de Probabilit\'es,deuxi\`eme edition}, Masson,  Paris, 1970.
\bibitem[Pflug1996]{Pflug1996} Pflug  G., {\em Optimization of Stochastic Models}, Kluwer Academic, 1996.
\bibitem[Pflug1988]{Pflug1988} Pflug, G.,  Derivatives of probability measures - Concepts and applications to the optimization of stochastic systems. In:  {\em Lecture Notes in Control and Information Science}, {\bf 1988},  Vol. 103,  Springer  Verlag,, 252-274.
\bibitem[Souriau1980]{Souriau1980} Souriau J.-M., Groupes diff\'erentiels, {\em  Lect. Notes in Math.},{\bf  1980} vol. 836, Springer Verlag,  91--128.
%\bibitem[Schervish1997]{Schervish1997} Schervish M. J.,{\em  Theory of Statistics}, Springer-Verlag: New York, USA, 2nd Edition, 1997.	
%\bibitem[Tsybakov2009]{Tsybakov2009} Tsybakov A.B., {\em Introduction to Nonparametric Estimation}, Springer Science+Business Media: New York, USA, 2009.
%\bibitem[Vapnik1998]{Vapnik1998}{\sc V. Vapnik}, {\em Statistical learning theory}, John Willey and Sons, 1998.
\bibitem[Watanabe2009]{Watanabe2009}{\sc S. Watanabe}, {\em Algebraic Geometry and Statistical Learning Theory}, Cambridge University Press, 2009.
\end{thebibliography}
\end{document}